\documentclass{article}
\usepackage{amsthm}

\usepackage{amsmath}
\usepackage{amssymb}
\newtheorem{theorem}{Theorem}
\newtheorem{lemma}{Lemma}
\usepackage{hyperref}
\newtheorem{definition}{Definition}
\usepackage{xcolor}
\pagecolor{white}
\color{black}

\usepackage{setspace}
\usepackage{amsmath, amssymb}

\begin{document}

\title{Optimal control problem for a nonlinear nonlocal evolution system describing  an interacting ternary mixture with an evaporating component: 2D case with bulk evaporation}
\author{Arghya Kundu \quad Adrian Muntean
\footnote{Both authors are with the Department of Mathematics and Computer Science, Karlstad University, Sweden}}
\date{\today}

\maketitle
\begin{abstract}
    We present an optimal control problem to guide the selection of morphology classes arising in organic solar cells. The study focuses on phase-separation processes in polymer–solvent mixtures, with particular attention to solvent evaporation as a mechanism to arrest morphology formation. We establish the existence of optimal controls  and analyze the Fréchet derivative of the control to state mapping. Finally, we derive the first-order necessary optimality condition via the corresponding adjoint system. 
\end{abstract}
\section{Introduction} 
We report here our first preliminary results concerning optimal control formulations aimed as tools to help deciding on classes of formed morphologies\footnote{Morphologies  are geometric pathways for electrons, charges, and so on, to move through the internal structure of the thin layer forming the solar cell, from anode to cathode.} as they occur in the framework of organic solar cells (OSC). The reader is referred to the review paper \cite{Review_Paper} for more details on the technological application and involved physical mechanisms. Although we nearly always have in mind the OSC setting, the reader should note that quite similar phase-separation scenarios are encountered in very different materials science applications. We mention here for instance the case when one attempts to influence the microstructural design of adhesive bands using lattice-based models for mixtures of (stochastically) interacting particle systems; see here the details \cite{Nils}.

Returning the discussion to OSCs, the nice particular feature of our setting is in fact twofold: (i) two components of the mixture are polymers and one is a solvent and (ii) the solvent evaporates - this fact is used to arrest the formation of the morphology (i.e. to stop the phase separation at a fixed concentration of evaporated solvent). We refer the reader to our numerical results reported in  \cite{CirilloLyonsMunteanEtAl2025MultiscaleandMultiphysicsModellingforAdvancedandSustainableMaterials} where we discussed evaporation matters in 2D; see {\em loc. cit.} for more references around this topic.  

In the framework of this article, we  aim to use the evaporation mechanism to formulate a first optimal control problem that allows for a certain design of solvent  distribution (linked in Section \ref{model_equations} to a given function $\phi_d\in L^2(S\times \Omega)$) to be achieved\footnote{This work should be seen as a proof-of-concept that the posed optimal control question can in fact be addressed. We present here only analysis results; serious computations are left for handling a  more realistic physical scenario.}. To this end, we use the model equations listed in   \cite{lyons2024phase} and consider the case when the evaporation mechanism acts as a bulk production term. The spirit of the calculations shown here follow to a large extent the way of working from \cite{frigeri2016optimal}.  Notice as well that two further cases of interest become apparent: (1) the evaporation takes place across part of the boundary of our domain (hence, boundary controls will need to be involved), and (2) the inverse temperature $\beta$ is used as control parameter. These cases will be studied elsewhere.

\subsection{Model equations}\label{model_equations}
Let $\Omega \subset \mathbb{R}^2$ be a bounded rectangle with the boundary $\partial \Omega$, and set a $T >0$ as a fixed time.  $S:=(0,T)$ is the time interval where we define our problem. Furthermore, we take $ \theta_{min}, \theta_{max} \in \mathbb{R}$ with $\theta_{min} \leq \theta_{max}$ and define the set 
\begin{equation}
    \mathcal{U}_{ad} := \{ \theta \in L^2 (S\times\Omega): \theta_{min} \leq \theta \leq \theta_{max}\,\text{a.e. in }S \times \Omega\}. \nonumber
\end{equation}  In this article, we aim to investigate the following distributed optimal control problem: Find $\theta \in \mathcal{U}_{ad}$ such that 
\begin{align}
    \mathcal{J}(\theta):= \frac{1}{2}\int_{S \times \Omega}|\phi-\phi_d|^2\, dx\,dt+\frac{\delta}{2}\int_{S \times \Omega}|\theta|^2 \, dx\,dt \label{equ 1}
\end{align}
reaches its infimum with respect to $\theta$ subject to both the control constraint 
\begin{equation}
    \theta \in \mathcal{U}_{ad} \label{equ 2}
\end{equation}
and the state equations
\begin{align}
    \partial_t m &= \nabla \cdot [\nabla m - 2 \beta (\phi-m^2)(\nabla J * m)] && \text{in} \,\, S \times \Omega,\label{equ 3}\\
    \partial_t \phi & = \nabla \cdot [\nabla \phi - 2 \beta m(1-\phi)(\nabla J * m)]+ \alpha (1-\phi) + \theta && \text{in} \,\, S \times \Omega,\label{equ 4}\\
    m(t&=0)= m_0 \,\, \text{and} \,\, \phi(t=0)= \phi_0  && \text{in} \,\,  \overline{\Omega},\label{equ 5}\\
    \phi, \,& m \,\,\text{are both periodic} && \text{on} \, \, \partial \Omega, \label{equ 6}
\end{align}
where $t \in S$ and $x \in \Omega$ denote the time and space variable, respectively, $m$ acts as a phase field indicator distinguishing between two polymeric phases, $\beta >0 $ denotes the inverse temperature, $J \in C^2_{+}(\overline{\Omega})$ is symmetric compactly supported potential normalized so that $ \int_{\Omega} J(x) \,dx=1$ and  $\theta$ is the control variable. For the variable $\phi$, $1-\phi$ represents the concentration of a solvent interacting with the polymers. 

It is worth noting that this evolution problem is a parabolic system coupled in terms of a nonlinear, nonlocal, degenerating drift. Its particular structure (with no productions by evaporation) was derived in \cite{Marra} as a suitable hydrodynamic limit of a Blume-Capel-Kac model with Kawasaki dynamics. 

We refer to the state equations \eqref{equ 3}--\eqref{equ 6} by ($\mathcal{P}$) and to the entire control problem \eqref{equ 1}--\eqref{equ 6} by ($\mathcal{OP})$.

\subsection{Organization of this work}
The remainder of the paper is structured as follows. In Section 2, we introduce the necessary mathematical preliminaries, including the definition of weak solutions and a continuous dependence result for the proposed problem. Section 3 is devoted to the main results, where we establish the existence of an optimal solution to the optimal control problem $\mathcal{(OP)}$, prove the Fréchet differentiability of the control-to-state operator, and finally, derive the first-order optimality condition through the associated adjoint problem. Section 4 explores possible extensions of the present study.
\section{Preliminaries}

\subsection{Notations}
For the reader's convenience, we recall some of the more relevant notation here. The notations $L^2(\Omega)$ and $H^1(\Omega)$ conventionally denote the Lebesgue and Sobolev spaces with their respective norms 
\begin{align*}
    ||u||_{L^2(\Omega)}:= \Big( \int_{\Omega}|u(x)|^2\,dx\Big)^\frac{1}{2}
\end{align*}
and 
\begin{align*}
    ||u||_{H^1(\Omega)}:= ||u||_{L^2(\Omega)}+ ||\nabla u||_{L^2(\Omega)}.
\end{align*}
The spaces $C^\infty(\Omega)$ and $C^\infty_{\#} (\Omega)$ denote the space of the smooth functions and the space of all smooth functions which are periodic on $\Omega$. We denote $H^1_{\#}(\Omega)$ as the closure of \textbf{$C^\infty_{\#} (\Omega)$} in the $H^1-$ norm and by $H^{-1}_{\#}(\Omega)$ the corresponding dual space. For convenience of writing, we will often omit to write the index $\#$ in the Sobolev space $H^1_\#(\Omega)$, so we will simply write $H^1(\Omega)$, but we still want the periodicity property to hold.  

We denote $L^p(0,T;B)$ as one of Bochner spaces  with the norm
\begin{align*}
    ||u||_{L^p(0,T;B)}:= \Big( \int_{0}^{T} ||u(t,x)||^p\,dx\Big)^\frac{1}{p}.
\end{align*}
Here, we are going to mostly use $p=2$ and $B$ is one of $L^2(\Omega), \, H^1(\Omega),\, H^1_{\#}(\Omega)$ and $H^{-1}(\Omega)$.  For the reader’s convenience, we introduce the notation:
\begin{equation*}
    \mathcal{W}:= \{ u: u \in L^2(S; H^1(\Omega)),\, \partial_t u \in L^2(S: H^{-1}(\Omega))\}.
\end{equation*}
More details on the definition and properties of the employed function spaces can be found, for instance, in \cite{brezis2011functional, leoni2017first}.
\subsection{Working assumptions}
Our study relies on the following  assumptions:
\begin{itemize}
    \item[(A1)] $\beta>0$.
    \item[(A2)]  $0 \leq |m_0| \leq |\phi_0| \leq 1$ a.e. in $\overline{\Omega}$.
    \item[(A3)] $\mathcal{U}$ is a bounded open subset of $L^2(S \times \Omega)$ containing $\mathcal{U}_{ad}$ and there exists a constant $K >0$ such that 
    \begin{equation*}
        ||\theta||_{L^2(S\times \Omega)} \leq K \,\, \text{for all}\,\, \theta \in \mathcal{U}.
    \end{equation*}
    \item[(A4)] $\delta >0$, $\phi_d \in L^2(S \times \Omega)$, $\theta_{min}, \theta_{max} \in L^\infty(S \times \Omega)$ such that $\theta_{min} \leq \theta_{max}$ for a.e. in $S \times \Omega$. 
\end{itemize}
All these assumptions are both physically and mathematically well motivated. (A1) and (A2) are needed not only to be able to speak about a good concept of solutions (see Section \ref{concept_of_solution}), but they turn to be essential for the phase separation property to hold. In this spirit, Theorem 1.1 cf.  \cite{lyons2024phase} guarantees that 
$$0 \leq |m| \leq |\phi| \leq 1 \mbox{ a.e. in } \overline{\Omega} \times S.$$
It is mathematically interesting to study the case $\beta\to \infty$ (i.e. the limit towards a complete freezing of the dynamics), while  the case $\beta<0$ is mathematically treatable, but physically it is a nonsense. (A3) and (A4) are ingredients needed for the current optimal control formulation.    

\subsection{Concept of  solution and basic properties}\label{concept_of_solution}

\begin{definition}\label{defi 1}
    We say that a pair $(m,\phi) \in \Big( L^2(S;H^1_{\#}(\Omega)) \cap L^\infty (S;L^2(\Omega))\Big)^2$ with $(\partial_t m, \partial_t \phi) \in \Big(L^2(S;H^{-1}_{\#}(\Omega))\Big)^2$ 
is a weak solution of the state equations $(\mathcal{P})$, if it satisfies $m(0,x)=m_0(x)$ and $\phi(0,x)=\phi_0(x)$ for all $x \in \Omega$ together with the identities
\begin{align}
    \langle \partial_t m, \psi\rangle\,dt + \int_{ \Omega} \nabla m \cdot \nabla \psi\, dx  =& \int_{\Omega} 2 \beta (\phi-m^2)(\nabla J * m) \cdot \nabla  \psi\, dx,\label{equ 7}\\    
    \langle \partial_t \phi , \eta\rangle+ \int_{ \Omega} \nabla \phi \cdot \nabla \eta\, dx =& \int_{ \Omega} 2 \beta m (1-\phi)(\nabla J * m) \cdot \nabla  \eta\, dx \nonumber\\
    & + \alpha \int_{ \Omega}(1-\phi)\eta\, dx+  \int_{ \Omega} \theta \eta \, dx,\label{equ 8}
\end{align}
for all $(\psi, \eta ) \in  H^1_{\#}(\Omega) \times H^1_{\#}(\Omega)$.
\end{definition}
Theorem 3.1, and respectively, Lemma 3.3 (both from \cite{lyons2024phase}) ensure the existence, and respectively, the uniqueness of solutions in the sense of Definition \ref{defi 1}. For convenience of the writing in the next sections, we omit to write the index $\#$ in the Sobolev space $H^1_\#(\Omega)$; so, instead of $H^1_\#(\Omega)$, we will simply write $H^1(\Omega)$.  

\begin{theorem}\label{thm 1}
    Let the assumptions (A1), (A2), (A3) be satisfied. Then the solution $(m,\phi)$ in the sense of Definition \ref{defi 1}  satisfies the following {\em a priori} estimate
    \begin{align*}
        || m||_{L^2(S;H^1(\Omega))}+ || \partial_t m||_{L^2(S;H^{-1}(\Omega))} +  || \phi ||_{L^2(S;H^1(\Omega))}+ || \partial_t \phi||_{L^2(S;H^{-1}(\Omega))} \leq C ,
    \end{align*}
    where $C=C(||m_0||_{L^2(\Omega)},\, ||\phi_0||_{L^2(\Omega)},\, || \theta||_{L^2(S \times \Omega)},T)>0$ is a finite constant.
\end{theorem}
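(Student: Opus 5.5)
We test \eqref{equ 7} with $\psi=m(t)$ and \eqref{equ 8} with $\eta=\phi(t)$. Both are admissible test functions for a.e. $t$, since $m,\phi\in L^2(S;H^1(\Omega))$ with time derivatives in $L^2(S;H^{-1}(\Omega))$. The Lions--Magenes lemma then gives $\langle\partial_t m,m\rangle=\frac12\frac{d}{dt}\|m\|^2_{L^2(\Omega)}$, and the same identity for $\phi$.

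The key structural input is the pointwise bound $0\le|m|\le|\phi|\le 1$ a.e., guaranteed by Theorem 1.1 of \cite{lyons2024phase} under (A1)--(A2). It tames the nonlinear drift. Since $|\phi-m^2|\le 2$ and $|m(1-\phi)|\le 2$, and since Young's convolution inequality together with $J\in C^2$ gives
$$\|\nabla J*m\|_{L^\infty(\Omega)}\le \|\nabla J\|_{L^\infty}\|m\|_{L^1(\Omega)}\le C_J|\Omega|^{1/2}\|m\|_{L^2(\Omega)}\le C,$$
the drift terms are bounded by $4\beta C\|\nabla m\|_{L^1}$ and by the analogous quantity for $\phi$. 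By Young's inequality, each is at most $\frac12\|\nabla m\|^2_{L^2}+C$ (respectively with $\phi$), which can be absorbed into the left-hand side.

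The remaining reaction terms are handled directly. We have $\alpha\int(1-\phi)\phi\le \alpha(|\Omega|+\|\phi\|^2_{L^2})$ and $\int\theta\phi\le\frac12\|\theta\|^2_{L^2}+\frac12\|\phi\|^2_{L^2}$. Adding the two identities yields
$$\frac{d}{dt}\big(\|m\|^2+\|\phi\|^2\big)+\|\nabla m\|^2+\|\nabla\phi\|^2\le C\big(1+\|\phi\|^2\big)+\|\theta(t)\|^2_{L^2(\Omega)}.$$
Gronwall's inequality then gives an $L^\infty(S;L^2)$ bound, in terms of $\|m_0\|,\|\phi_0\|,\|\theta\|_{L^2(S\times\Omega)}$ (bounded by $K$ via (A3)) and $T$. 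Integrating in time gives the $L^2(S;H^1)$ bound. Even without the norm bound, $|m|,|\phi|\le1$ bounds the $L^2$ norms trivially.

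For the time derivatives we use duality. For $\psi\in H^1(\Omega)$ with $\|\psi\|_{H^1}\le1$, \eqref{equ 7} gives
$$|\langle\partial_t m,\psi\rangle|\le\|\nabla m\|_{L^2}+2\beta\|\phi-m^2\|_{L^\infty}\|\nabla J*m\|_{L^\infty}|\Omega|^{1/2}\le \|\nabla m\|_{L^2}+C.$$
Hence $\|\partial_t m\|_{H^{-1}}\le\|\nabla m\|_{L^2}+C$. Squaring and integrating over $S$, and using the previous step, bounds $\|\partial_t m\|_{L^2(S;H^{-1})}$. The same argument for $\phi$ produces the extra contributions $\alpha\|1-\phi\|_{L^2}+\|\theta\|_{L^2(\Omega)}$, both square-integrable in time.

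The only delicate point is controlling the degenerate nonlocal drift without the smallness of $\beta$. We resolve it through the $L^\infty$ invariant bound from \cite{lyons2024phase} and the smoothing of the convolution, which places all derivatives on $J$. If one wished to avoid invoking the invariant region, one would instead need a cubic estimate on $m^2\nabla J*m$. That estimate could be handled through the 2D Ladyzhenskaya inequality, but it would not close globally, so relying on the invariant region is essential.
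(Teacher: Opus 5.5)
Your proposal follows essentially the same route as the paper and is correct given the same inputs: test the weak formulation with $m$ and $\phi$, control the nonlocal drift through the invariant bounds $0\le|m|\le|\phi|\le1$ (the paper uses them implicitly through $\|\phi-m^2\|_{L^\infty(\Omega)}$ and $\|m(1-\phi)\|_{L^\infty(\Omega)}$), close with Young and Gr\"onwall, and bound the time derivatives by duality. The differences are minor: you estimate $\nabla J*m$ in $L^\infty$ via $\|\nabla J\|_{L^\infty}\|m\|_{L^1}$ rather than via $\|\nabla J\|_{L^1}\|m\|_{L^2}$, and you handle $\alpha\int_\Omega(1-\phi)\phi\,dx$ more carefully than the paper's bound $C_1\|\phi\|^2_{L^2(\Omega)}$; note, however, that you, like the paper, import the invariant region of \cite{lyons2024phase} into the controlled system, even though a source $\theta>0$ can push $\phi$ above $1$ (take $m_0\equiv0$, $\phi_0\equiv1$ and $\theta$ a positive constant, so that $\phi'=\alpha(1-\phi)+\theta>0$ at $\phi=1$).
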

\begin{proof}
    Multiplying \eqref{equ 3} and $\eqref{equ 4}$ by $m $ and $\phi$, respectively and integrating the resulting equalities over $\Omega$, we obtain
    \begin{align}
     \frac{1}{2} \frac{d}{dt} || m||^2_{L^2(\Omega)}+ || \nabla m||^2_{L^2(\Omega)} =& \int_{\Omega} 2 \beta (\phi-m^2)(\nabla J * m)\cdot \nabla m \,dx\nonumber\\
     \leq &\, 2 \beta ||\nabla m||_{L^2(\Omega)} || \phi-m^2||_{L^\infty(\Omega)} || \nabla J ||_{L^1(\Omega)} ||m||_{L^2(\Omega)} \label{eq 8}
    \end{align}
    and 
    \begin{align}
        \frac{1}{2} \frac{d}{dt} || \phi||^2_{L^2(\Omega)}+ || \nabla \phi||^2_{L^2(\Omega)} =& 2 \beta \int_{\Omega} m(1-\phi)(\nabla J * m)\cdot \nabla \phi \,dx + \alpha \int_{\Omega} (1-\phi)\phi\, dx+ \int_{\Omega} \theta \phi \, dx \nonumber\\
        \leq &\, 2 \beta || m(1-\phi)||_{L^\infty(\Omega)} || \nabla J||_{L^1} ||m||_{L^2(\Omega)} || \nabla \phi||_{L^2(\Omega)}+ C_1|| \phi||^2_{L^2(\Omega)} \nonumber\\
        &+ ||\theta||_{L^2(\Omega)} || \phi||_{L^2(\Omega)}. \label{eq 9}
    \end{align} 
Now, from Young's and Grönwall's inequalities, \eqref{eq 8} and \eqref{eq 9} imply that 
\begin{equation*}
     ||m||_{L^2(S; H^1(\Omega))}+ ||\phi||_{L^2(S; H^1(\Omega))}\leq C_1,
\end{equation*}
where $C_1=C_1(||m_0||_{L^2(\Omega)},\, ||\phi_0||_{L^2(\Omega)},\, || \theta||_{L^2(S \times \Omega)},T)$.
\par Furthermore, choosing $\psi \in H^1(\Omega)$ yields
\begin{align}
    |\langle\partial_t m , \psi \rangle| \leq & || \nabla m||_{L^2(\Omega)} || \nabla \psi||_{L^2(\Omega)}+ 2 \beta || \phi-m^2||_{L^2(\Omega)} || \nabla J * m ||_{L^\infty(\Omega)} || \nabla \psi ||_{L^2(\Omega)}\nonumber\\
    \leq & \Big\{|| \nabla m||_{L^2(\Omega)} + 2 \beta || \phi-m^2||_{L^2(\Omega)} || \nabla J * m ||_{L^\infty(\Omega)} \Big\} ||\psi ||_{H^1(\Omega)}\nonumber.
    \end{align}
    We then get
    \begin{align}   &\underset{||\psi||_{H^1(\Omega) }\leq 1}{\text{sup}} |\langle \partial_t m, \psi\rangle| \leq  || \nabla m||_{L^2(\Omega)} + 2 \beta || \phi-m^2||_{L^2(\Omega)} || \nabla J * m ||_{L^\infty(\Omega)}
           \end{align}
           and hence, 
         $$|| \partial_t m||_{L^2(S; H^{-1}(\Omega))} \leq C_2.$$
   
    Similarly, one obtains $|| \partial_t \phi||_{L^2(S; H^{-1}(\Omega))} \leq C_3$, where the constants $C_2$ and $C_3$ depend on initial conditions, $K$ and $T$.

\end{proof}

\begin{theorem}\label{thm 2}
    Let $(m_1, \phi_1)$, $(m_2, \phi_2)$ be two solutions in the sense of Definition \ref{defi 1}  for given control functions $\theta_1$, $\theta_2 \in \mathcal{U}$, respectively. Then the following estimate holds:
    \begin{align}
        || m_1- m_2||_{L^2(S;H^1(\Omega))}+ || \partial_t m_1 -\partial_t m_2||_{L^2(S;H^{-1}(\Omega))}+ || \phi_1- \phi_2||_{L^2(S;H^1(\Omega))}\nonumber\\
        +|| \partial_t \phi_1 -\partial_t \phi_2||_{L^2(S;H^{-1}(\Omega))}
        \leq C || \theta_1 - \theta_2||_{L^2(S \times \Omega)},
    \end{align}
    where the constant $C>0$ depends on $K,\,T$ and the initial data.
\end{theorem}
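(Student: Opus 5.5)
We set $m:=m_1-m_2$, $\phi:=\phi_1-\phi_2$ and $\theta:=\theta_1-\theta_2$, subtract the weak formulations \eqref{equ 7}--\eqref{equ 8} for the two solutions, and run an $L^2$ energy estimate followed by Grönwall's inequality. The bound $0\le |m_i|\le|\phi_i|\le 1$ (Theorem 1.1 of \cite{lyons2024phase}) will control all coefficients in $L^\infty$. The differences of the nonlinear drift terms are split by adding and subtracting intermediate terms:
\begin{align*}
(\phi_1-m_1^2)\nabla J*m_1-(\phi_2-m_2^2)\nabla J*m_2 =& \big(\phi-(m_1+m_2)m\big)\nabla J*m_1 \\
&+(\phi_2-m_2^2)\nabla J*m,
\end{align*}
and similarly
\begin{align*}
m_1(1-\phi_1)\nabla J*m_1-m_2(1-\phi_2)\nabla J*m_2 =& \big(m(1-\phi_1)-m_2\phi\big)\nabla J*m_1 \\
&+m_2(1-\phi_2)\nabla J*m.
\end{align*}
The linear reaction term gives $-\alpha\phi$, and the control contributes $\theta$.

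Next we test with $\psi=m$ and $\eta=\phi$, which is admissible since $\partial_t m\in L^2(S;H^{-1})$ and $m\in L^2(S;H^1)$ give the Lions–Magenes identity for $\frac{d}{dt}\|m\|^2$. We control each right-hand side term by Young's inequality for convolutions. Because $J\in C^2$ with compact support, $\|\nabla J*u\|_{L^\infty}\le\|\nabla J\|_{L^\infty}\|u\|_{L^1}\le C\|u\|_{L^2}$, and in particular $\|\nabla J*m_1\|_{L^\infty}\le C$, since $|m_1|\le1$ on the bounded domain. Hence, for instance,
\[
\int_\Omega |(\phi-(m_1+m_2)m)\,\nabla J*m_1\cdot\nabla m|\,dx \le C(\|\phi\|_{L^2}+\|m\|_{L^2})\|\nabla m\|_{L^2}.
\]
Every term is therefore bounded by $\varepsilon(\|\nabla m\|^2+\|\nabla\phi\|^2)+C_\varepsilon(\|m\|^2+\|\phi\|^2)+\frac12\|\theta\|^2$. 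Absorbing the gradient terms and noting that the initial difference is zero (same initial data), Grönwall's inequality yields
\[
\sup_t\big(\|m\|^2+\|\phi\|^2\big)+\int_S\big(\|\nabla m\|^2+\|\nabla \phi\|^2\big)\,dt\le C\|\theta\|^2_{L^2(S\times\Omega)}.
\]

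For the time derivatives we argue by duality exactly as in the proof of Theorem \ref{thm 1}. We take $\|\psi\|_{H^1}\le1$ in the difference equation and bound $|\langle\partial_t m,\psi\rangle|\le \|\nabla m\|+C(\|m\|+\|\phi\|)$, and similarly $|\langle\partial_t \phi,\eta\rangle|\le \|\nabla\phi\|+C(\|m\|+\|\phi\|)+\|\theta\|_{L^2(\Omega)}$. Squaring and integrating over $S$ then gives the bound in $L^2(S;H^{-1})$ by the previous estimate. Taking square roots finishes the proof. The constant depends only on $\beta,\alpha,J,|\Omega|,T$; the dependence on $K$ enters only through the a priori bounds of Theorem \ref{thm 1}, should one prefer those to the pointwise bounds.

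The main obstacle is the nonlinear nonlocal drift difference, in particular the cubic term $m_1^2-m_2^2$. This requires the $L^\infty$ bounds on $m_i,\phi_i$, so we must make sure that the pointwise bounds from \cite{lyons2024phase} remain valid in the presence of the control $\theta$ (the source $\theta$ could a priori break $|\phi|\le1$). If this is not available, the fallback is to use the 2D Ladyzhenskaya inequality $\|u\|_{L^4}^2\le C\|u\|_{L^2}\|u\|_{H^1}$. Combined with the bounds from Theorem \ref{thm 1}, we would estimate $\|(m_1+m_2)m\|_{L^2}\le\|m_1+m_2\|_{L^4}\|m\|_{L^4}$ and absorb it with a Grönwall weight in $L^1(S)$.
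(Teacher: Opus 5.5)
Your proposal is essentially the paper's proof. It uses the same difference system and the same add-and-subtract splitting of the drift (your second identity is the correctly signed version of the paper's), tests with $m$ and $\phi$, applies Gr\"onwall with zero initial difference, and bounds the time derivatives by duality.

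Your caution about the pointwise bounds is justified, so the fallback is the argument you actually have to run. With $m_0=0$, $\phi_0=1$ and a constant $\theta>0$, the spatially homogeneous solution has $m\equiv 0$ and $\phi(t)>1$ for $t>0$, so $|\phi_i|\le 1$ is not available once a control is present. The fallback is what the paper does: it bounds the drift via $L^4$ norms and the $L^\infty(S;L^2)\cap L^2(S;H^1)$ regularity of $(m_i,\phi_i)$, leaving the 2D Ladyzhenskaya step implicit. In that route you must also redo the $H^{-1}$ step using the $L^4$ products and H\"older in time, rather than the bound $C(\|m\|+\|\phi\|)$. For example, $\int_S\|m_1+m_2\|_{L^4}^2\|m\|_{L^4}^2\,dt\le C\sup_t\|m\|_{L^2}\,\|m_1+m_2\|_{L^4(S\times\Omega)}^2\,\|m\|_{L^2(S;H^1)}$.
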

\begin{proof}
Let $(m_1, \phi_1)$ and $(m_2, \phi_2)$ be two solutions of \eqref{equ 3}--\eqref{equ 5} with the same initial conditions and controls $\theta_1$ and $\theta_2$, respectively. Define $m:=m_1-m_2, \, \phi:= \phi_1 - \phi_2$ and $\theta:= \theta_1 - \theta_2$ for $t \in S$. Then the pair $(m, \phi)$ satisfies the equations
\begin{align}
   \partial_t m &= \Delta m - 2 \beta \nabla \cdot [ (\phi_1-m^2_1)(\nabla J * m_1)- (\phi_2-m^2_2)(\nabla J * m_2)] && \text{in} \,\, S \times \Omega,\label{equ 9}\\
    \partial_t \phi & =\Delta \phi - 2 \beta \nabla \cdot [ m_1(1-\phi_1)(\nabla J * m_1)- m_2(1-\phi_2)(\nabla J * m_2)] -\alpha\phi + \theta && \text{in} \,\, S \times \Omega,\label{equ 10}\\
    m(t&=0)=0,\,\, \,\, \phi(t=0)= 0  && \text{in} \,\,  \overline{\Omega},\label{equ 11} \\
    m & =0, \,\,  \,\, \phi= 0  && \text{on} \,\,  S \times \partial \Omega.\label{equ }
\end{align}
Multiplying \eqref{equ 9} by $m$ and then integrating the resulting equality over $\Omega$ yields
\begin{align}
    &\frac{1}{2}||m||^2_{L^2(\Omega)}+ ||\nabla m||^2_{L^2(\Omega)}\nonumber\\
    =& 2 \beta \Big[\int_{\Omega} (\phi_1-m^2_1)(\nabla J * m_1)\cdot \nabla m \, dx- \int_{\Omega}(\phi_2-m^2_2)(\nabla J * m_2) \cdot \nabla m\, dx\Big]\label{equ 12}.
\end{align}
At this point, it is convenient to  observe the following structure in the drift term:
\begin{align}
    &(\phi_1-m^2_1)(\nabla J * m_1)- (\phi_2-m^2_2)(\nabla J * m_2)\nonumber\\
    =\,& \phi_1(\nabla J * m_1)- \phi_2 (\nabla J * m_1)+ \phi_2 (\nabla J * m_1)- \phi_2 (\nabla J * m_2)\nonumber\\
    &\,\,- m^2_1 (\nabla J * m_1) + m^2_2 (\nabla J * m_1)-m^2_2 (\nabla J * m_1)+m^2_2 (\nabla J * m_2)\nonumber\\
    =&\, \phi (\nabla J * m_1) + \phi_2 (\nabla J * m)- m(m_1+m_2)(\nabla J * m_1)-m^2_2 (\nabla J * m)\label{equ 13}.
\end{align}
As a consequence, \eqref{equ 13} turns now \eqref{equ 12} into 
\begin{align}
    &\frac{1}{2}\frac{d}{dt}||m||^2_{L^2(\Omega)}+ ||\nabla m||^2_{L^2(\Omega)}\nonumber\\
    \leq&\, 2 \beta \Big\{||\phi||_{L^2(\Omega)} || \nabla m ||_{L^2(\Omega)} || \nabla J * m_1||_{L^\infty(\Omega)}+ ||\phi_2||_{L^2(\Omega)} ||\nabla m||_{L^2(\Omega)} || \nabla J * m||_{L^\infty(\Omega)}\nonumber\\
    & \, +||m||_{L^4(\Omega)}||\nabla m||_{L^2(\Omega)} || m_1+m_2||_{L^4(\Omega)}||\nabla J * m_1||_{L^\infty(\Omega)}\nonumber\\
    &\, + ||m_2||^2_{L^4(\Omega)} || \nabla J * m||_{L^\infty(\Omega)} || \nabla m ||_{L^2(\Omega)}\Big\}. \label{equ 14}
\end{align}
Now, multiplying \eqref{equ 9} by $\phi$ and integrating the resulting equality over $\Omega$, we obtain
\begin{align}
     &\frac{1}{2}||\phi||^2_{L^2(\Omega)}+ ||\nabla \phi||^2_{L^2(\Omega)}\nonumber\\
     = & 2 \beta \int_{\Omega} [ m_1(1-\phi_1)(\nabla J * m_1)- m_2(1-\phi_2)(\nabla J * m_2)]\cdot \nabla \phi \, dx-  \alpha\int_{\Omega} \phi^2 \, dx + \int_{\Omega}  \theta \phi \, dx. \label{equ 15}
\end{align}
It holds
\begin{align}
    &m_1(1-\phi_1)(\nabla J * m_1)- m_2(1-\phi_2)(\nabla J * m_2)\nonumber\\
    =&\, m_1 (\nabla J * m_1) -m_2 (\nabla J * m_1)+ m_2 (\nabla J * m_1)- m_2 (\nabla J * m_2)+ m_1 \phi_1 (\nabla J * m_1)\nonumber\\
    & - m_2 \phi_1 (\nabla J * m_1)+ m_2 \phi_1 (\nabla J * m_1)-m_2 \phi_2 (\nabla J * m_1)+ m_2 \phi_2 (\nabla J * m_1)-  m_2 \phi_2 (\nabla J * m_2)\nonumber\\
    = & \, m (\nabla J * m_1) + m_2 (\nabla J * m)+ m \phi_1 (\nabla J * m_1)+ m_2 \phi (\nabla J * m_1)+ m_2 \phi_2 (\nabla J * m).
\end{align}
Hence, \eqref{equ 15} becomes
\begin{align}
     &\frac{1}{2}\frac{d}{dt}||\phi||^2_{L^2(\Omega)}+ ||\nabla \phi||^2_{L^2(\Omega)}\nonumber\nonumber\\
     \leq & 2 \beta \Big\{||m||_{L^2(\Omega)} || \nabla \phi||_{L^2(\Omega)} || \nabla J * m_1 ||_{L^\infty(\Omega)}+ ||m_2 ||_{L^2(\Omega)} ||\nabla \phi||_{L^2(\Omega)}|| \nabla J * m||_{L^\infty(\Omega)}\nonumber\\
     & \,+ ||m||_{L^4} || \nabla \phi||_{L^2(\Omega)} || \phi_1||_{L^4(\Omega)} || \nabla J * m_1||_{L^\infty(\Omega)}+||m_2||_{L^4(\Omega)} || \phi||_{L^4(\Omega)} || \nabla \phi||_{L^2(\Omega)} || \nabla J * m_1 ||_{L^\infty(\Omega)}\nonumber\\ 
     &\,+|| m_2||_{L^4(\Omega)} || \phi_2||_{L^4(\Omega)} || \nabla \phi||_{L^2(\Omega)} || \nabla J * m||_{L^\infty(\Omega)}\Big\}+ ||\theta||_{L^2(\Omega)} ||\phi||_{L^2(\Omega)} +  \alpha ||\phi||^2_{L^2(\Omega)}.  \label{equ 17}
\end{align}
As next step in the calculations, we add \eqref{equ 14} and \eqref{equ 17} and use the facts $|| \nabla J * u||_{L^\infty(\Omega)} \leq || \nabla J||_{L^2(\Omega)}|| u||_{L^2(\Omega)}$ and $(m_i, \phi_i) \in \Big( L^2(S;H^1_{\#}(\Omega)) \cap L^\infty (S;L^2(\Omega))\Big)^2$. These ingredients combined with Young's and Grönwall's inequalities imply  that 
\begin{equation*}
    || m||_{L^2(S;H^1(\Omega))}+ || \phi||
    _{L^2(S;H^1(\Omega))} \leq C_1 || \theta||_{L^2(S \times \Omega)},
\end{equation*}
where the constant $C_1 >0 $ depends on $T$.
\par Furthermore, choose $\psi \in H^1(\Omega)$ to get:
\begin{align*}
    |\langle \partial_t m, \psi\rangle| \leq&  \, || \nabla m||_{L^2(\Omega)}|| \nabla \psi||_{L^2(\Omega)}+ 2 \beta \Big\{ || \phi||_{L^2(\Omega)} ||\nabla J * m_1||_{L^\infty(\Omega)} +||\phi_2||_{L^2(\Omega)}|| \nabla J * m||_{L^\infty(\Omega)} \\
&+||m||_{L^4(\Omega)}||m_1+m_2||_{L^4(\Omega)}   ||\nabla J * m_1||_{L^\infty(\Omega)} +||m_2||^2_{L^4(\Omega)} || \nabla J * m||_{L^\infty(\Omega)}\}|| \nabla \psi||_{L^2(\Omega)}\\
    \leq & \Big\{  || \nabla m||_{L^2(\Omega)} + 2 \beta \Big( || \phi||_{L^2(\Omega)} ||\nabla J * m_1||_{L^\infty(\Omega)} +||\phi_2||_{L^2(\Omega)}|| \nabla J * m||_{L^\infty(\Omega)} \\
&+||m||_{L^4(\Omega)}||m_1+m_2||_{L^4(\Omega)}   ||\nabla J * m_1||_{L^\infty(\Omega)} +||m_2||^2_{L^4(\Omega)} || \nabla J * m||_{L^\infty(\Omega)}\Big) \Big\} ||\psi||_{H^1(\Omega)}.
\end{align*}
Hence, it holds 
\begin{align*}
    \underset{||\psi||_{H^1(\Omega) }\leq 1}{\text{sup}} |\langle \partial_t m, \psi\rangle| \leq &  || \nabla m||_{L^2(\Omega)} + 2 \beta \Big( || \phi||_{L^2(\Omega)} ||\nabla J * m_1||_{L^\infty(\Omega)} +||\phi_2||_{L^2(\Omega)}|| \nabla J * m||_{L^\infty(\Omega)} \\
    &+||m||_{L^4(\Omega)}||m_1+m_2||_{L^4(\Omega)}   ||\nabla J * m_1||_{L^\infty(\Omega)} +||m_2||^2_{L^4(\Omega)} || \nabla J * m||_{L^\infty(\Omega)}\Big),
    \end{align*}
    which finally ensures 
    $$|| \partial_t m||_{L^2(S;H^{-1}(\Omega))} \leq \, C_2 ||\theta ||_{L^2(S \times \Omega)}.$$
Similarly, one obtains
\begin{align*}
     \underset{||\psi||_{H^1(\Omega) }\leq 1}{\text{sup}} |\langle \partial_t \phi, \eta\rangle| \leq & || \nabla \phi||_{L^2(\Omega)} + 2 \beta\Big\{||m||_{L^2(\Omega)} || \nabla J * m_1 ||_{L^\infty(\Omega)}+ ||m_2 ||_{L^2(\Omega)}|| \nabla J * m||_{L^\infty(\Omega)}\nonumber\\
     & \,+ ||m||_{L^4}  || \phi_1||_{L^4(\Omega)} || \nabla J * m_1||_{L^\infty(\Omega)}+||m_2||_{L^4(\Omega)} || \phi||_{L^4(\Omega)} || \nabla J * m_1 ||_{L^\infty(\Omega)}\nonumber\\ 
     &\,+|| m_2||_{L^4(\Omega)} || \phi_2||_{L^4(\Omega)} || \nabla J * m||_{L^\infty(\Omega)}\Big\}+ ||\theta||_{L^2(\Omega)}  +  \alpha ||\phi||_{L^2(\Omega)}\\
      \Rightarrow \,  || \partial_t \phi||_{L^2(S;H^{-1}(\Omega))} \leq &\, C_3 ||\theta ||_{L^2(S \times \Omega)}. 
\end{align*}
Putting all these facts together, we simply conclude that the control-to-state-operator i.e., $\mathcal{S}: \theta \to (m, \phi)$ is Lipschitz continuous from $L^2(S \times \Omega)$ to $$\mathcal{F}:= H^1(S; H^{-1}(\Omega)) \cap L^2(S ; H^1(\Omega)) \times H^1(S; H^{-1}(\Omega)) \cap L^2(S ; H^1(\Omega)).$$ 
Hence, the control to state mapping $\mathcal{S}: \theta \to \mathcal{S}(\theta)= (m, \phi)$ is well-defined as a mapping from $L^2(S \times \Omega)$ onto $\mathcal{F}$.  Moreover, $\mathcal{S}$ is a Lipschitz continuous mapping from $\mathcal{U}$ of $L^2(S\times \Omega)$ to $\mathcal{F}$.

\end{proof}

\section{Optimality condition}
\subsection{Existence of an optimal control}
\begin{theorem}
     Assume (A1)--(A2) are satisfied. Then the optimal control problem $\mathcal{(OP)}$ admits a solution.
\end{theorem}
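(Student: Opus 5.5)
We use the direct method of the calculus of variations. Since $\mathcal{J}\geq 0$, the infimum $j:=\inf_{\theta\in\mathcal{U}_{ad}}\mathcal{J}(\theta)$ is finite, and we may pick a minimizing sequence $(\theta_n)\subset\mathcal{U}_{ad}$ with $\mathcal{J}(\theta_n)\to j$. The set $\mathcal{U}_{ad}$ is bounded in $L^2(S\times\Omega)$, because $\theta_{min},\theta_{max}\in L^\infty$ and $S\times\Omega$ is bounded. It is also convex and closed, so it is weakly closed. Hence, up to a subsequence, $\theta_n\rightharpoonup\bar\theta$ weakly in $L^2(S\times\Omega)$ with $\bar\theta\in\mathcal{U}_{ad}$.

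Let $(m_n,\phi_n)=\mathcal{S}(\theta_n)$. By Theorem \ref{thm 1}, whose constant depends on $\|\theta_n\|_{L^2}$, which is uniformly bounded, the sequences $(m_n),(\phi_n)$ are bounded in $\mathcal{W}$. We also use the a priori pointwise bounds $|m_n|\leq|\phi_n|\leq 1$ from \cite{lyons2024phase}. Some care is needed here, since the source $\theta$ may spoil this bound; if necessary we use only the $L^\infty(S;L^2)$ bound together with the $L^4$ interpolation already employed in Theorem \ref{thm 2}. By Banach–Alaoglu and the Aubin–Lions lemma ($H^1\hookrightarrow\hookrightarrow L^2\hookrightarrow H^{-1}$), we extract a subsequence with
\begin{align*}
m_n\rightharpoonup \bar m,\ \phi_n\rightharpoonup\bar\phi \text{ weakly in } L^2(S;H^1(\Omega)),\qquad \partial_t m_n\rightharpoonup\partial_t\bar m,\ \partial_t\phi_n\rightharpoonup\partial_t\bar\phi \text{ weakly in } L^2(S;H^{-1}(\Omega)),
\end{align*}
and $m_n\to\bar m$, $\phi_n\to\bar\phi$ strongly in $L^2(S;L^2(\Omega))$, as well as a.e. in $S\times\Omega$. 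Moreover $\mathcal{W}\hookrightarrow C(\overline S;L^2(\Omega))$ with weak continuity of the trace at $t=0$, so the initial conditions pass to the limit.

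The main step is identifying $(\bar m,\bar\phi)$ as the weak solution corresponding to $\bar\theta$, that is, passing to the limit in the nonlinear nonlocal drift terms of \eqref{equ 7}–\eqref{equ 8}, tested against $\psi\,\chi(t)$ with $\chi\in C_c^\infty(S)$. The linear terms pass to the limit by weak convergence, including $\int\theta_n\eta\chi\to\int\bar\theta\eta\chi$ since $\eta\chi\in L^2(S\times\Omega)$. For the drift we first use $\|\nabla J*(m_n-\bar m)\|_{L^\infty(\Omega)}\leq\|\nabla J\|_{L^2}\|m_n-\bar m\|_{L^2(\Omega)}$, which gives $\nabla J*m_n\to\nabla J*\bar m$ strongly in $L^2(S;L^\infty(\Omega))$. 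Next, $\phi_n-m_n^2\to\bar\phi-\bar m^2$ and $m_n(1-\phi_n)\to\bar m(1-\bar\phi)$ a.e. These products are bounded in $L^\infty$ (or, in 2D, in $L^2(S;L^2)$ via Ladyzhenskaya and the $\mathcal{W}$ bound), so Vitali's theorem or interpolation gives strong convergence in $L^2(S\times\Omega)$. The product of these coefficients with the convolution term is then bounded in $L^2$ and converges a.e., hence weakly in $L^2(S\times\Omega;\mathbb{R}^2)$. Since $\nabla\psi\,\chi$ is fixed in $L^2$, the right-hand sides converge. By the uniqueness result (Lemma 3.3 of \cite{lyons2024phase}), $(\bar m,\bar\phi)=\mathcal{S}(\bar\theta)$. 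Alternatively, once the bounds are secured, the Lipschitz estimate of Theorem \ref{thm 2} can be combined with this argument.

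Finally we prove lower semicontinuity. The strong convergence $\phi_n\to\bar\phi$ in $L^2(S\times\Omega)$ gives $\int|\phi_n-\phi_d|^2\to\int|\bar\phi-\phi_d|^2$. Weak lower semicontinuity of the norm gives $\|\bar\theta\|^2\leq\liminf\|\theta_n\|^2$. Therefore $\mathcal{J}(\bar\theta)\leq\liminf\mathcal{J}(\theta_n)=j$, so $\bar\theta$ is an optimal control. The hardest point is the limit passage in the cubic-type drift $m_n^2(\nabla J*m_n)\cdot\nabla\psi$. We will try first to rely on the uniform $L^\infty$ bounds, and fall back on the 2D Gagliardo–Nirenberg ($L^4$) estimate.
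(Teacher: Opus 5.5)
Your proposal is correct and follows essentially the same route as the paper: a minimizing sequence in $\mathcal{U}_{ad}$, the uniform bounds of Theorem~\ref{thm 1}, Aubin--Lions compactness, passage to the limit in the nonlocal drift via strong convergence of $\nabla J * m_n$, and weak lower semicontinuity of $\mathcal{J}$, with you supplying details the paper leaves implicit (weak closedness of $\mathcal{U}_{ad}$, recovery of the initial data, identification of the limit via uniqueness). One minor imprecision is that in your $L^4$ fallback $m_n^2$ is only bounded in $L^2(S\times\Omega)$ and need not converge strongly there, but your subsequent argument (boundedness in $L^2$ plus a.e.\ convergence gives weak $L^2$ convergence of the drift products) is all that is actually needed.
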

\begin{proof}
    Abusing a bit of notation, we let $m := \underset{\theta \in \mathcal{U}_{\text{ad}}}{\text{inf}}\,\mathcal{J}(\theta)$. Since $0 \leq m < \infty$, there exists a sequence $(\theta_n)_{n} \in \mathcal{U}_{\text{ad}} \subset L^2(S \times \Omega)$ such that $\underset{n \to \infty}{\lim} \mathcal{J}(\theta_n)=m$. Further assume that the pair  $(m_n, \phi_n)$ satisfies the state equations $\mathcal{(P)}$ for a given $\theta_n$ with $n \in \mathbb{N}$. Hence, relying on the {\em a priori} estimates stated in  Theorem \ref{thm 1}, one can extract subsequences $(m_n)_n,\, (\partial_t m_n)_n,\, (\phi_n)_n,\, (\partial_t \phi_n)_n$ and $(\theta_n)_n$, still indexed by $n$ such that the following convergences hold:

\begin{enumerate}
        \item[(i)] $\theta_n  \overset{w}{\rightharpoonup} \theta$ in $\mathcal{U}_{\text{ad}} \subset L^2(S \times \Omega)$,
        \item[(ii)] $m_n\overset{w}{\rightharpoonup} m$ in $L^2(S ; H^1(\Omega))$,
        \item[(iii)] $\partial_t m_n \overset{w}{\rightharpoonup} \partial_t m$ in $L^2(S ; H^{-1}(\Omega))$,
         \item[(iv)] $\phi_n\overset{w}{\rightharpoonup} \phi$ in $L^2(S ; H^1(\Omega))$,
         \item[(v)] $\partial_t \phi_n \overset{w}{\rightharpoonup} \partial_t \phi$ in $L^2(S ; H^{-1}(\Omega))$,
         \item[(vi)] $m_n \to  m$ strongly in $L^2(S \times \Omega)$,
         \item[(vii)] $\phi_n \to  \phi$ strongly in $L^2(S \times \Omega)$.
    \end{enumerate}
    Now, we recall that $J \in C^2_{+}(\overline{\Omega})$ and compactly supported implies that $\nabla J \in L^1(\Omega)$. Then $|| \nabla J * m||_{L^2(\Omega)} \leq C || m||_{L^2(\Omega)}$. So the map $ m \to \nabla J * m$ is linear and continuous from $L^2(\Omega)  \to L^2(\Omega)$. Hence,
    \begin{equation*}
        m_n \to m \,\, \text{strongly in} \, \, L^2(\Omega) \,\text{implies} \,\nabla J * m_n  \to \nabla J * m \,\, \text{strongly in} \, \, L^2(\Omega).
    \end{equation*}
    Hence, $(m, \phi)$ is the weak solution of the state equations corresponding the to the control $\theta$. Next, the weakly lower semicontinuity of $\mathcal{J}$ implies that $\theta \in \mathcal{U}_{ad}$ is an optimal control for the proposed optimal control problem; see e.g. \cite{jpraymond, troltzsch2010optimal} for more background in the context of optimal control problems posed  for PDEs. 
\end{proof}
Within this framework, the symbol $\hat{\theta} \in \mathcal{U}_{ad}$ consistently denotes a locally optimal control, while $(\hat{m}, \hat{\phi}) = \mathcal{S}(\hat{\theta})$ represents its corresponding state.

\subsection{Structure of the linearized system} Let $h \in L^2(S \times \Omega)$ be fixed arbitrarily. Now, to be able to ensure the Fréchet differentiability of the control to state operator $\mathcal{S}$, we need to find firstly the linearized form of the state equations $(\mathcal{P})$ around the solution pair $(\hat{m}, \hat{\phi})$. Taking $m= \hat{m} + \varphi_1, \, \phi= \hat{\phi} + \varphi_2$ and substituting this in $\mathcal{(P)}$, we obtain the following linearized system

\begin{align}
    \partial_t \varphi_1 =& \nabla \cdot [ \nabla \varphi_1- 2 \beta (\varphi_2 - 2 \hat{m}\varphi_1)(\nabla J * \hat{m})-2 \beta (\hat{\phi}-\hat{m}^2)(\nabla J * \varphi_1)] \,\, \text{in}\,\, S \times \Omega,\label{lin 1}\\
    \partial_t \varphi_2=& \nabla \cdot[ \nabla \varphi_2- 2\beta \hat{m}(1- \hat{\phi})(\nabla J * \varphi_1)-2 \beta \varphi_1 (1- \hat{\phi})(\nabla J * \hat{m})\nonumber\\
    & \,\,+ 2 \beta \varphi_2 \hat{m}(\nabla J * \hat{m})]- \alpha \varphi_2 +h  \,\,\hspace{3cm}\text{in}\,\, S \times \Omega,\label{lin 2}\\
   \varphi_1, \varphi_2 &\text{ are periodic}\hspace{5.9cm}\,\, \text{on}\,\, \partial \Omega,\\
    \varphi_1(t&=0)= 0, \,\,  \,\, \varphi_2(t=0)= 0  \hspace{4.1cm} \text{in} \,\,  \overline{\Omega}\label{lin 3}.
\end{align}
\begin{theorem}\label{thm 4}
    For any $h \in L^2(S \times \Omega)$, the linearized system defined by \eqref{lin 1}--\eqref{lin 3} possesses a unique weak solution $$(\varphi_1, \varphi_2) \in  \mathcal{W} \times \mathcal{W}.$$ Furthermore, the following stability estimate holds
    \begin{align*}
        || \varphi_1||_{L^2(S; H^1(\Omega))} +|| \partial_t \varphi_1 ||_{L^2(S; H^{-1}(\Omega))} + || \varphi_2||_{L^2(S; H^1(\Omega))} +|| \partial_t \varphi_2 ||_{L^2(S; H^{-1}(\Omega))} \leq C|| h||_{L^2(S \times \Omega)},
    \end{align*}
    where $C > 0$ has the same dependency as in the statement of Theorem \ref{thm 1}.  
    \end{theorem}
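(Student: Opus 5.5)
The plan is the standard Faedo--Galerkin argument for linear parabolic systems with bounded coefficients. The key observation is that the system \eqref{lin 1}--\eqref{lin 3} is \emph{linear} in $(\varphi_1,\varphi_2)$. All of its coefficients, $\hat m$, $\hat\phi$, $\hat\phi-\hat m^2$, $\hat m(1-\hat\phi)$, $1-\hat\phi$, $\nabla J*\hat m$, are essentially bounded.

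The boundedness comes from two facts. First, Theorem 1.1 of \cite{lyons2024phase} (recalled after (A4)) gives $|\hat m|\le|\hat\phi|\le 1$ a.e. Second, since $J\in C^2$ has compact support, Young's convolution inequality gives
\[
\|\nabla J*u\|_{L^\infty(\Omega)}\le\|\nabla J\|_{L^\infty(\Omega)}\|u\|_{L^1(\Omega)}\le C\|u\|_{L^2(\Omega)},
\]
and hence $\|\nabla J*\hat m\|_{L^\infty(S\times\Omega)}\le C$. The nonlocal terms $\nabla J*\varphi_1$ are therefore controlled pointwise by $\|\varphi_1\|_{L^2(\Omega)}$ alone.

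First I write the weak formulation, tested against $(\psi,\eta)\in H^1_\#(\Omega)^2$, exactly as in Definition \ref{defi 1}. I take a Galerkin basis of eigenfunctions of the periodic Laplacian (e.g. the Fourier basis on the rectangle). The projected problem is a linear ODE system with $L^\infty$-in-time coefficients, so it has a unique absolutely continuous solution on $S$.

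Next I derive the a priori estimate by testing with $\varphi_1$ and $\varphi_2$ respectively. In the $\varphi_1$-equation the right-hand side terms are bounded by
\[
2\beta\big(\|\varphi_2\|_{L^2}+2\|\varphi_1\|_{L^2}\big)\|\nabla J*\hat m\|_{L^\infty}\|\nabla\varphi_1\|_{L^2}
+2\beta\|\hat\phi-\hat m^2\|_{L^\infty}\,C\|\varphi_1\|_{L^2}\|\nabla\varphi_1\|_{L^2}.
\]
I treat the $\varphi_2$-equation analogously. The extra terms $-\alpha\|\varphi_2\|^2$ and $\int h\varphi_2\le\frac12\|h\|^2_{L^2}+\frac12\|\varphi_2\|^2_{L^2}$ cause no difficulty. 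Young's inequality with $\varepsilon$ absorbs all gradient factors into the dissipation, which leaves
\[
\frac{d}{dt}\big(\|\varphi_1\|^2+\|\varphi_2\|^2\big)+\|\nabla\varphi_1\|^2+\|\nabla\varphi_2\|^2\le C\big(\|\varphi_1\|^2+\|\varphi_2\|^2\big)+\|h\|^2_{L^2(\Omega)}.
\]
With zero initial data, Grönwall's inequality then gives the bound in $L^\infty(S;L^2)\cap L^2(S;H^1)$ by $C\|h\|_{L^2(S\times\Omega)}$. I bound the time derivatives by duality, exactly as in the proof of Theorem \ref{thm 1}. The $L^\infty$ bounds on the coefficients show that every flux term lies in $L^2(S;L^2)$ with norm at most $C(\|\varphi_1\|+\|\varphi_2\|+\|\nabla\varphi_1\|+\|\nabla\varphi_2\|)$, and the resulting constant depends only on $\beta$, $\alpha$, $J$ and $T$. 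The Galerkin estimates are uniform in the dimension, so I pass to the limit using weak compactness in $\mathcal W$ together with the Aubin--Lions lemma. Linearity makes this passage immediate: products of a fixed $L^\infty$ coefficient with a weakly convergent sequence still converge weakly. The initial conditions follow because $\mathcal W\hookrightarrow C(\overline S;L^2(\Omega))$.

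Uniqueness follows from linearity. The difference of two solutions solves the same system with $h=0$, and the energy estimate above (justified by the Lions--Magenes lemma for elements of $\mathcal W$) forces it to vanish. The stability estimate is inherited from the uniform Galerkin bound by weak lower semicontinuity of norms.

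The main obstacle is the uniform $L^\infty$ control of the coefficients. The estimates in this paper for the nonlinear problem used $L^4$ bounds, which would be awkward here. If the bounds $|\hat m|\le|\hat\phi|\le1$ were unavailable, I would instead use the 2D Ladyzhenskaya inequality $\|u\|_{L^4}^2\le C\|u\|_{L^2}\|u\|_{H^1}$. I would then place the resulting time-integrable factor $\|\hat m\|^2_{H^1}\in L^1(S)$ inside Grönwall's inequality.
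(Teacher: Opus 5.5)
Your proposal is correct and follows essentially the paper's route: Galerkin existence, testing the two linearized equations with $\varphi_1$ and $\varphi_2$, Young plus Gr\"onwall with zero initial data, and a duality bound for $\partial_t\varphi_i$ in $L^2(S;H^{-1}(\Omega))$. The paper likewise relies on $L^\infty$ bounds on $\hat m$, $\hat\phi$ and $\nabla J*\hat m$ in its constant $C_1$.

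You go beyond the paper by actually proving uniqueness via linearity and the Lions--Magenes lemma. Keep the Ladyzhenskaya fallback, because $|\hat\phi|\le 1$ genuinely fails once $\theta$ is added: $m_0\equiv 0$, $\phi_0\equiv 1$, $\theta\equiv c>0$ gives $m\equiv0$ and $\phi>1$ for small $t>0$. In that fallback the integrable Gr\"onwall weight should be $\|\hat m\|_{H^1}^2+\|\hat\phi\|_{H^1}^2$ rather than $\|\hat m\|_{H^1}^2$ alone, since products such as $\varphi_1\hat\phi$ and $\hat m\hat\phi$ occur in the $\varphi_2$-equation.
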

    \begin{proof}
     The existence of weak solutions to the linearized system \eqref{lin 1}---\eqref{lin 2} follows from a straightforward application of the Galerkin approximation method, see e.g. \cite{evans2022partial}. 
     
     In the context of this proof, we are  mainly concerned with ensuring the stated stability bound with respect to $h \in L^2(S \times \Omega)$. Multiplying \eqref{lin 1} by $\varphi_1$ and then integrating the resulting equalities over $\Omega$, we obtain the following expressions:
        \begin{align}
            &\frac{1}{2} \frac{d}{dt} ||\varphi_1||^2_{L^2(\Omega)} + || \nabla \varphi_1||^2_{L^2(\Omega)}\nonumber\\
            = & 2 \beta \int_{\Omega} \Big\{(\varphi_2 - 2 \hat{m}\varphi_1)(\nabla J * \hat{m})+ (\hat{\phi}-\hat{m}^2)(\nabla J * \varphi_1)\Big\} \cdot \nabla \varphi_1\nonumber\\
            \leq & 2\beta \int_{\Omega} \Big(|| \varphi_2 - 2 \hat{m}\varphi_1||_{L^2(\Omega)} || \nabla J * \hat{m}||_{L^\infty(\Omega)}+ || \hat{\phi}-\hat{m}^2||_{L^2(\Omega)} || \nabla J * \varphi_1||_{L^\infty(\Omega)}\Big) || \nabla \varphi_1||_{L^2( \Omega)}\nonumber\\
            \leq&  2 \beta || \varphi_2 ||_{L^2(\Omega)} ||\nabla J * \hat{m}||_{L^\infty(\Omega)}  || \nabla \varphi_1||_{L^2(\Omega)} + 4 \beta || \hat{m}||_{L^\infty(\Omega)} ||\nabla J * \hat{m}||_{L^\infty(\Omega)} || \varphi_1 ||_{L^2(\Omega)}|| \nabla \varphi_1||_{L^2(\Omega)}\nonumber\\
            & +2 \beta|| \hat{\phi}-\hat{m}^2||_{L^2(\Omega)} || \nabla J ||_{L^2(\Omega)}|| \varphi_1 ||_{L^2(\Omega)}|| \nabla \varphi_1||_{L^2(\Omega)}. \label{lin 4}
        \end{align}
Again, multiplying \eqref{lin 2} by $\varphi_2$ and integrating the resulting equalities over $\Omega$, we obtain
\begin{align}
     &\frac{1}{2} \frac{d}{dt} ||\varphi_2||^2_{L^2(\Omega)} + || \nabla \varphi_2||^2_{L^2(\Omega)}\nonumber\\
     = &  \int_{\Omega}2 \beta  \Big\{  \hat{m}(1- \hat{\phi})(\nabla J * \varphi_1)+ \varphi_1 (1- \hat{\phi})(\nabla J * \hat{m})- \varphi_2 \hat{m}(\nabla J * \hat{m})\Big\}\cdot \nabla \varphi_2\, dx + \int_{ \Omega} (h - \alpha \varphi_2)\varphi_2\, dx \nonumber\\
     \leq & 2\beta \Big\{|| \hat{m}(1- \hat{\phi})||_{L^2(\Omega)} || \nabla J||_{L^2(\Omega)} || \varphi_1||_{L^2(\Omega)}+ || \varphi_1||_{L^2(\Omega)} ||(1- \hat{\phi})(\nabla J * \hat{m})||_{L^\infty(\Omega)} \nonumber\\
     &+ || \varphi_2||_{L^2(\Omega)} ||\hat{m}(\nabla J * \hat{m})||_{L^\infty(\Omega)}\Big\} || \nabla \varphi_2 ||_{L^2(\Omega)}+ \alpha || \varphi_2 ||^2_{L^2(\Omega)} + || h||_{L^2( \Omega)} || \varphi_2||_{L^2(\Omega)}.\label{lin 5}
\end{align}
Now, adding \eqref{lin 4} and \eqref{lin 5} and applying the Young's inequality, we obtain
\begin{align*}
     \frac{d}{dt} (||\varphi_1||^2_{L^2(\Omega)}+   ||\varphi_2||^2_{L^2(\Omega)})+ (|| \nabla \varphi_1||^2_{L^2(\Omega)}+ || \nabla \varphi_2||^2_{L^2(\Omega)})\\
     \leq C_1 (||\varphi_1||^2_{L^2(\Omega)}+   ||\varphi_1||^2_{L^2(\Omega)}) + C_2 ||h||^2_{L^2(\Omega)},
\end{align*}
where
\begin{align*}
    C_1= C\Big(1+ ||\nabla J * \hat{m}||^2_{L^\infty(\Omega)} + || \hat{m}||^2_{L^\infty(\Omega)} ||\nabla J * \hat{m}||^2_{L^\infty(\Omega)}
    + || \hat{\phi}-\hat{m}^2||^2_{L^2(\Omega)} || \nabla J ||^2_{L^2(\Omega)}\\
    + || \hat{m}(1- \hat{\phi})||^2_{L^2(\Omega)} || \nabla J||^2_{L^2(\Omega)}
    + ||(1- \hat{\phi})(\nabla J * \hat{m})||^2_{L^\infty(\Omega)} + ||\hat{m}(\nabla J * \hat{m})||^2_{L^\infty(\Omega)}\Big).
\end{align*}
Integrating the above inequality over $(0,t)$ and applying the Grönwall's inequality, we obtain
\begin{align*}
    (||\varphi_1(t)||^2_{L^2(\Omega)}+   ||\varphi_2(t)||^2_{L^2(\Omega)})+ \int_{0}^{T} (|| \nabla \varphi_1||^2_{L^2(\Omega)}+ || \nabla \varphi_2||^2_{L^2(\Omega)})\,dt \leq C_2||h||^2_{L^2(S \times \Omega)},
\end{align*}
for a.e. $t \in S$. Furthermore, 
\begin{align*}
    \int_{0}^{T} || \partial_t \varphi_1 ||_{H^{-1}(\Omega)} dt\leq& \int_{0}^{T} \Big\{|| \nabla \gamma_1 ||_{L^2(\Omega)}+|| \varphi_2 - 2 \hat{m}\varphi_1||_{L^2(\Omega)} || \nabla J * \hat{m}||_{L^\infty(\Omega)}\\
    &+ || \hat{\phi}-\hat{m}^2||_{L^2(\Omega)} || \nabla J * \varphi_1||_{L^\infty(\Omega)}\Big\}dt
\end{align*}
and 
\begin{align*}
     \int_{0}^{T} || \partial_t \varphi_2 ||_{H^{-1}(\Omega)} \,dt\leq& \int_{0}^{T} \Big[|| \nabla \gamma_2 ||_{L^2(\Omega)}+ 2 \beta \Big\{|| \hat{m}(1- \hat{\phi})||_{L^2(\Omega)} || \nabla J||_{L^2(\Omega)} || \varphi_1||_{L^2(\Omega)}\nonumber\\
     &+ || \varphi_1||_{L^2(\Omega)} ||(1- \hat{\phi})(\nabla J * \hat{m})||_{L^\infty(\Omega)} + || \varphi_2||_{L^2(\Omega)} ||\hat{m}(\nabla J * \hat{m})||_{L^\infty(\Omega)}\Big\}\\
     &+ \alpha || \varphi_2||_{L^2(\Omega)}+ ||h||_{L^2(\Omega)}\Big]\,dt.
\end{align*}
  
    \end{proof}

\subsection{Differentiability of the control to state operator}

\begin{theorem}
   Let the assumptions (A1)--(A3) hold true. Then, for any $\hat{\theta} \in \mathcal{U}$, the control to state mapping $\mathcal{S}$ is Fréchet differentiable from $L^2(S \times \Omega)$ into the space $\mathcal{F}$. Moreover, for any $h \in L^2(S \times \Omega)$, its Fréchet derivative $\mathcal{D}S$ is given by
    \begin{equation*}
        D \mathcal{S}(\hat{\theta})(h) = (\varphi_1, \varphi_2),
    \end{equation*}
    where the pair $(\varphi_1, \varphi_2)$ is the weak solution to the linearized system \eqref{lin 1}--\eqref{lin 3} with respect to $h $.
\end{theorem}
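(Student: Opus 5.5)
The plan is the standard remainder argument. Fix $\hat{\theta}\in\mathcal{U}$ and $h\in L^2(S\times\Omega)$ with $\|h\|_{L^2(S\times\Omega)}$ small enough that $\hat{\theta}+h\in\mathcal{U}$. Set $(m^h,\phi^h):=\mathcal{S}(\hat{\theta}+h)$ and $(\hat m,\hat\phi):=\mathcal{S}(\hat\theta)$, and let $(\varphi_1,\varphi_2)$ be the solution of \eqref{lin 1}--\eqref{lin 3} given by Theorem \ref{thm 4}. Define the remainders
\[
\xi:=m^h-\hat m-\varphi_1,\qquad \zeta:=\phi^h-\hat\phi-\varphi_2 .
\]
It then suffices to prove
\[
\|\xi\|_{L^2(S;H^1)}+\|\partial_t\xi\|_{L^2(S;H^{-1})}+\|\zeta\|_{L^2(S;H^1)}+\|\partial_t\zeta\|_{L^2(S;H^{-1})}\le C\|h\|^2_{L^2(S\times\Omega)} .
\]
Since $h\mapsto(\varphi_1,\varphi_2)$ is linear and bounded from $L^2(S\times\Omega)$ into $\mathcal{F}$ by Theorem \ref{thm 4}, this bound gives Fréchet differentiability with $D\mathcal{S}(\hat\theta)h=(\varphi_1,\varphi_2)$.

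First I write the equations for $(\xi,\zeta)$. Subtracting the state equations for $\hat\theta$ and the linearized system from those for $\hat\theta+h$, the control $h$ cancels, and $\xi(0)=\zeta(0)=0$ with periodic boundary conditions. Write $u:=m^h-\hat m$ and $v:=\phi^h-\hat\phi$. The drift in the $m$-equation can be expanded exactly, in the spirit of \eqref{equ 13}:
\begin{align*}
&(\phi^h-(m^h)^2)(\nabla J*m^h)-(\hat\phi-\hat m^2)(\nabla J*\hat m)\\
&\quad=(v-2\hat m u-u^2)(\nabla J*\hat m)+(\hat\phi-\hat m^2)(\nabla J*u)+(v-2\hat m u-u^2)(\nabla J*u).
\end{align*}
Replacing $u=\xi+\varphi_1$ and $v=\zeta+\varphi_2$ in the linear part splits the right-hand side into two pieces. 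The first is the linearized operator applied to $(\xi,\zeta)$, which has the same structure as \eqref{lin 1}--\eqref{lin 2}. The second is a quadratic remainder,
\[
R_1=-u^2(\nabla J*\hat m)+(v-2\hat m u-u^2)(\nabla J*u).
\]
The $\phi$-equation works the same way. Its remainder $R_2$ is made of terms like $u(\nabla J*u)$, $\hat m\,v(\nabla J*u)$, $u v(\nabla J*\hat m)$ and $uv(\nabla J*u)$. The reaction term $\alpha(1-\phi)$ is affine, so it leaves no remainder.

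Next I test the $\xi$-equation with $\xi$ and the $\zeta$-equation with $\zeta$. The linear terms are handled exactly as in \eqref{lin 4}--\eqref{lin 5}, using $\|\hat m\|_{L^\infty}\le1$, $|\hat\phi|\le1$ (from (A2) via \cite{lyons2024phase}) and $\|\nabla J*w\|_{L^\infty}\le\|\nabla J\|_{L^2}\|w\|_{L^2}$. The remainder contributes $\int R_i\cdot\nabla(\cdot)$, which after Young's inequality reduces to bounding $\|R_i\|_{L^1(S;L^2)}$ or $\|R_i\|_{L^2(S\times\Omega)}$. Grönwall's inequality then gives
\[
\|\xi\|^2_{L^\infty(S;L^2)}+\|\nabla\xi\|^2_{L^2(S;L^2)}+(\text{same for }\zeta)\le C\big(\|R_1\|^2_{L^2(S\times\Omega)}+\|R_2\|^2_{L^2(S\times\Omega)}\big).
\]
The time-derivative bounds follow by duality, as in the last step of Theorem \ref{thm 2}.

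The main obstacle is showing $\|R_i\|_{L^2(S\times\Omega)}\le C\|h\|^2$. The plan is to use the $L^\infty$ bounds $|m^h|,|\hat m|,|\phi^h|,|\hat\phi|\le1$, so that $|u|,|v|\le2$ and cubic terms such as $u^2(\nabla J*u)$ reduce to quadratic ones. A typical quadratic term is estimated as
\[
\|v(\nabla J*u)\|_{L^2(\Omega)}\le\|\nabla J\|_{L^2}\|u\|_{L^2}\|v\|_{L^2}.
\]
Integrating in time gives $\le C\|u\|_{L^\infty(S;L^2)}\|v\|_{L^2(S;L^2)}$, and both factors are at most $C\|h\|$ by Theorem \ref{thm 2}. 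Here I need the $L^\infty(S;L^2)$ bound, which the Grönwall step in that proof provides. Pure local terms like $u^2(\nabla J*\hat m)$ need $\|u\|^2_{L^4(S\times\Omega)}$. For these I use the 2D Ladyzhenskaya inequality $\|u\|^2_{L^4}\le C\|u\|_{L^2}\|u\|_{H^1}$, which gives
\[
\|u^2\|_{L^2(S\times\Omega)}=\|u\|^2_{L^4(S\times\Omega)}\le C\|u\|_{L^\infty(S;L^2)}\|u\|_{L^2(S;H^1)}\le C\|h\|^2,
\]
again by Theorem \ref{thm 2}; this is where $\Omega\subset\mathbb{R}^2$ is used. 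If a cross term such as $\hat m\,v(\nabla J*u)$ resists this treatment, I would move one factor onto the test-function gradient and absorb it in the dissipation via Young's inequality. Collecting these bounds yields the claimed quadratic estimate, and therefore the Fréchet differentiability.
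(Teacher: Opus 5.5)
Your proposal is correct and follows essentially the same route as the paper: you form the remainder $(m^h-\hat m-\varphi_1,\ \phi^h-\hat\phi-\varphi_2)$, test its equations with itself, apply Young and Gr\"onwall, recover the time derivatives by duality, and bound the quadratic source terms through the Lipschitz estimate of Theorem~\ref{thm 2}. The only difference is in how the remainder equation is split. You keep the linearized operator at $(\hat m,\hat\phi)$, so the source is purely quadratic in the increments; this needs only Theorem~\ref{thm 2} plus Ladyzhenskaya for the $L^4(S\times\Omega)$ terms, which the paper uses without stating it. The paper instead evaluates the coefficients at $(m^h,\phi^h)$, which produces sources mixing the increments with $(\varphi_1,\varphi_2)$, and so it also invokes Theorem~\ref{thm 4}.
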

\begin{proof}

Assume that for any fixed $\hat{\theta} \in \mathcal{U}, \mathcal{S}(\hat{\theta})= (\hat{m}, \hat{\phi})$ be the associated solution to the state equations $\mathcal{(P)}$. Since $\mathcal{U}$ is an open subset of $L^2(S \times \Omega)$, there exists $\lambda > 0$ such that for any $h \in L^2( S\times \Omega)$ with $||h||_{L^2(S \times \Omega)} \leq \lambda $, one can say $ \hat{\theta} + h \in \mathcal{U}$. For such $h \in L^2( S\times \Omega)$, we let $(m^h, \phi^h)$ be the solution to the state equations $\mathcal{(P)}$ with respect to $\hat{\theta}+ h$. Define $m^h-\hat{m}=: y_1$ and $\phi^h-\hat{\phi} =: y_2$, then the pair $(y_1, y_2)$ satisfies 
    \begin{align}
        \partial_t y_1 = & \Delta y_1 - 2 \beta \nabla \cdot [(\phi^h -(m^h)^2)(\nabla J * y_1)+ \{y_2  - y_1(m^h + y_1)\}(\nabla J * \hat{m})] && \text{in}\,\, S \times \Omega,\label{fre 1}\\
        \partial_t y_2 = & \Delta y_2 - 2 \beta \nabla \cdot [ m^h (1-\phi^h)(\nabla J * y_1)+ (y_1-y_1\phi^h - \hat{m}y_2)(\nabla J * \hat{m})]-\alpha y_2+h&& \text{in}\,\, S \times \Omega,\label{fre 2}\\
       y_1 & =0, \,\,  \,\, y_2= 0  && \text{on} \,\,  S \times \partial \Omega,\\
        y_1(0,x&)=0, \,\,y_2(0,x)=0&& \text{in}\,\,  \overline{\Omega}. \label{fre 3}
    \end{align}
\end{proof}
As next step, we define $z_1:= y_1- \varphi_1$ and $z_2:= y_2- \varphi_2$, then $(z_1, z_2)$ satisfies 
\begin{align}
    \partial_t z_1=& \Delta z_1 - 2\beta \nabla \cdot [ (\phi^h- (m^h)^2)(\nabla J * z_1)+ (y_2 -y_1(m^h+\hat{m}))(\nabla J * \varphi_1)\nonumber\\
    & \qquad+ (z_2-2\hat{m}z_1-y^2_1)(\nabla J * \hat{m})] \,\, \text{in}\,\, S \times \Omega\label{fre 4}\\
    \partial_t z_2=& \Delta z_2 - 2\beta \nabla \cdot [m^h(1-\phi^h)(\nabla J * z_1)+ (y_1-m^hy_2+ \hat{\phi} y_1)(\nabla J * \varphi_1)\nonumber\\
    &\qquad+ (z_1 - \phi^h z_1-\varphi_1 y_2-\hat{m}z_2 )(\nabla J * \hat{m})] \,\, \text{in}\,\, S \times \Omega\label{fre 5}\\
    z_1(0,x&)=0,\,\, z_2(0,x)=0\,\, \text{in}\,\,  \overline{\Omega}.\label{fre 6}
\end{align}

Multiplying the equation \eqref{fre 4} by $z_1$ and integrating the resulting equalities over $\Omega$, we obtain
\begin{align}
    &\frac{1}{2} \frac{d}{dt}||z_1||^2_{L^2(\Omega)} + || \nabla z_1||^2_{L^2(\Omega)}\nonumber\\
    =& 2 \beta\int_{\Omega}\Big \{(\phi^h- (m^h)^2)(\nabla J * z_1)+ (y_2 -y_1(m^h+\hat{m}))(\nabla J * \varphi_1)+ (z_2-2\hat{m}z_1-y^2_1)(\nabla J * \hat{m})\Big\}\cdot \nabla z_1\, dx\nonumber\\
    \leq &\, 2 \beta\Big\{|| \phi^h- (m^h)^2||_{L^2(\Omega)}||\nabla J * z_1||_{L^\infty(\Omega)} +||  y_2 -y_1(m^h+\hat{m})||_{L^2(\Omega)}|| \nabla J * \varphi_1||_{L^\infty(\Omega)}\nonumber\\
    &\quad \quad + || z_2-2\hat{m}z_1-y^2_1||_{L^2(\Omega)}|| \nabla J * \hat{m}||_{L^\infty(\Omega)}\Big\} || \nabla z_1||_{L^2(\Omega)}\nonumber\\
    \leq &2 \beta\Big\{|| \phi^h- (m^h)^2||_{L^2(\Omega)} || \nabla J||_{L^2(\Omega)}||z_1||_{L^2(\Omega)} + ||  y_2 -y_1(m^h+\hat{m})||_{L^2(\Omega)}|| \nabla J * \varphi_1||_{L^\infty(\Omega)}\nonumber\\
    &\quad \quad + || z_2-2\hat{m}z_1-y^2_1||_{L^2(\Omega)}|| \nabla J * \hat{m}||_{L^\infty(\Omega)}\Big\} || \nabla z_1||_{L^2(\Omega)} \label{fre 7}
\end{align}
Again, multiplying $z_2$ with \eqref{fre 4} and integrating the resulting equality over $\Omega$, we get 
\begin{align}
    &\frac{1}{2} \frac{d}{dt}||z_2||^2_{L^2(\Omega)} + || \nabla z_2||^2_{L^2(\Omega)}\nonumber\\
    \leq & 2 \beta\int_{\Omega} \Big\{m^h(1-\phi^h)(\nabla J * z_1)+ (y_1-m^hy_2+ \hat{\phi} y_1)(\nabla J * \varphi_1)\nonumber\\
    &\qquad+ (z_1 - \phi^h z_1-\varphi_1 y_2 -\hat{m}z_2)(\nabla J * \hat{m})\Big\} \cdot  \nabla z_2 + \alpha ||z_2||^2_{L^2(\Omega)}\nonumber\\
    & \leq 2 \beta\Big\{|| m^h||_{L^4(\Omega)} ||1-\phi^h||_{L^4(\Omega)} || \nabla J * z_1||_{L^\infty}+ || y_1-m^h y_2 + \hat{\phi} y_1||_{L^2(\Omega)}  || \nabla J * \varphi_1||_{L^\infty(\Omega)}\nonumber\\
    & \,\,+ || z_1 - \phi^h z_1-\varphi_1 y_2 -\hat{m}z_2||_{L^2(\Omega)} || \nabla J * \hat{m}||_{L^\infty(\Omega)} \Big\}|| \nabla z_2||_{L^2(\Omega)}+\alpha || z_2||^2_{L^2(\Omega)} \label{fre 8}
\end{align}
Adding \eqref{fre 7}, \eqref{fre 8} and using the Young's inequality, we obtain
\begin{align}
    \frac{d}{dt}(||z_1||^2_{L^2(\Omega)}+||z_2||^2_{L^2(\Omega)}) + (|| \nabla z_1||^2_{L^2(\Omega)}+ || \nabla z_2||^2_{L^2(\Omega)}) \leq C_1 (||z_1||^2_{L^2(\Omega)}+||z_2||^2_{L^2(\Omega)}) + C_2, \label{fre 9}
\end{align}
where 
\begin{align*}
    C_1= C\Big(1+ ||\phi^h-m^h||^2_{L^2(\Omega)}||\nabla J ||^2_{L^2(\Omega)}+ ||\hat{m}||^2_{L^\infty(\Omega)}|| \nabla J * \hat{m}||^2_{L^\infty(\Omega)}+ || \phi^h||^2_{L^\infty(\Omega)} ||\nabla J * \hat{m}||^2_{L^\infty(\Omega)}\\
    + ||m^h||^2_{L^4(\Omega)}||1-\phi^h||^2_{L^4(\Omega)}|| \nabla J ||^2_{L^2(\Omega)}+ ||\hat{m}||^2_{L^\infty(\Omega)}|| \nabla J * \hat{m}||^2_{L^\infty(\Omega)} \Big)
\end{align*}
and
\begin{align*}
    C_2 = C\Big( ||  y_2 -y_1(m^h+\hat{m})||^2_{L^2(\Omega)}|| \varphi_1||^2_{L^2(\Omega)} || \nabla J ||^2_{L^2(\Omega)}
    + ||y_1||^4_{L^4(\Omega)}||\nabla J * \hat{m}||^2_{L^\infty(\Omega)}\\
    + ||y_1-m^h y_2 + \hat{\phi}y_1||^2_{L^2(\Omega)}||\varphi_1||^2_{L^2(\Omega)} || \nabla J ||^2_{L^2(\Omega)}
    + || \varphi_1||^4_{L^4(\Omega)} || \nabla J * \hat{m}||^2_{L^\infty(\Omega)}\\
    +|| y_2||^4_{L^4(\Omega)}|| \nabla J * \hat{m}||^2_{L^\infty(\Omega)} \Big).
\end{align*}
Now, Gronwall's inequality implies that 
\begin{align}
    ||z_1(t)||^2_{L^2(\Omega)}+||z_2(t)||^2_{L^2(\Omega)}+ \int_{0}^{T} || \nabla z_1||^2_{L^2(\Omega)}+ || \nabla z_2||^2_{L^2(\Omega)})\,dt \lesssim \Big(  \int_{0}^{T}C_2(t) \,dt\Big) e^{\int_{0}^{T}C_1(t)\,dt}.
\end{align}

    Furthermore,
    \begin{align*}
        \int_{0}^{T} || \partial_t z_1||_{H^{-1}(\Omega)}\, dt  \leq &\int_{0}^{T} \Big[ || \nabla z_1||_{L^2(\Omega)}+ 2 \beta \Big\{|| \phi^h- (m^h)^2||_{L^2(\Omega)}||\nabla J||_{L^2(\Omega)}  ||z_1||_{L^2(\Omega)} \nonumber\\
    &\quad \quad +||  y_2 -y_1(m^h+\hat{m})||_{L^2(\Omega)}|| \nabla J * \varphi_1||_{L^\infty(\Omega)}\nonumber\\
    &\quad \quad +|| z_2-2\hat{m}z_1-y^2_1||_{L^2(\Omega)}|| \nabla J * \hat{m}||_{L^\infty(\Omega)}\Big\} \Big]dt
    \end{align*}
and
\begin{align*}
    \int_{0}^{T} || \partial_t z_1||_{H^{-1}(\Omega)}\, dt  \leq 
    & \int_{0}^{T} \Big[ || \nabla z_2||_{L^2(\Omega)}+ 2 \beta \Big\{ || m^h||_{L^4(\Omega)} ||1-\phi^h||_{L^4(\Omega)} || \nabla J * z_1||_{L^\infty}\\
    &+ || y_1-m^h y_2 + \hat{\phi} y_1||_{L^2(\Omega)}  || \nabla J * \varphi_1||_{L^\infty(\Omega)}\nonumber\\
    & \,\,+ || z_1 - \phi^h z_1-\varphi_1 y_2 -\hat{m}z_2||_{L^2(\Omega)} || \nabla J * \hat{m}||_{L^\infty(\Omega)} \Big\}+ \alpha ||z_2||_{L^2(\Omega)}\Big]\,dt.
\end{align*}
Now, from the Theorem \ref{thm 2}, Theorem \ref{thm 4} and the explicit form of the $C_1$ and $C_2$ imply that 
\begin{align}
    ||z_1(t)||^2_{L^2(\Omega)}+||z_2(t)||^2_{L^2(\Omega)}+ \int_{0}^{T} (|| \nabla z_1||^2_{L^2(\Omega)}+ || \nabla z_2||^2_{L^2(\Omega)}+ || \partial_t z_1||_{H^{-1}(\Omega)}+ || \partial_t z_2||_{H^{-1}(\Omega)})\,dt \nonumber\\
    \leq C(t, ||h||_{L^2(S\times \Omega)})||h||^2_{L^2(S\times \Omega)},
\end{align}
where $ \underset{||h||_{L^2(S\times \Omega)} \to 0}{ \lim} C(t, ||h||_{L^2(S\times \Omega)})=0.$ Hence,
\begin{align*}
    \underset{||h||_{L^2(S\times \Omega)} \to 0}{ \lim} \frac{||\mathcal{S}(\hat{\theta}+ h)- \mathcal{S}(\hat{\theta})- \mathcal{D} \mathcal{S}(\hat{\theta}) h||}{||h ||_{L^2(S \times \Omega)}} = 0.
\end{align*}

Next, we derive the necessary variational inequality characterizing optimal controls. Due to the quadratic nature of the cost functional and the application of the chain rule, the reduced functional defined by the composition
\begin{equation*}
    \mathcal{J}(\theta)= \mathcal{J}(\mathcal{S}(\theta), \theta)
\end{equation*}
admits a Fréchet derivative at every $\hat{\theta} \in \mathcal{U}$, where the Fréchet derivative is given by
\begin{align*}
     \mathcal{D} \mathcal{J}(\hat{\theta}) = \mathcal{D}_{(m, \phi)} \mathcal{J}(\mathcal{S}(\hat{\theta}), \hat{\theta})\cdot \mathcal{D} \mathcal{S}( \hat{\theta})+ \mathcal{D}_\theta  \mathcal{J}\mathcal{S}(\hat{\theta}).
 \end{align*}
 Now, using the convexity of $\mathcal{U}_{ad}$ we obtain that for any local minimizer $\hat{\theta} \in \mathcal{U}_{ad}$ of $\mathcal{J}$ in $\mathcal{U}_{ad}$,
 
\begin{equation*}
    \mathcal{D} \mathcal{J}(\hat{\theta})( \theta - \hat{\theta}) \geq 0 \,\, \text{for all}\,\, \theta \in \mathcal{U}_{ad}.
\end{equation*}
We summarize these calculations in the next result.
\begin{theorem}
   Assume that (A1)--(A3) are satisfied and let $ \hat{\theta}$ be an optimal control for the $\mathcal{(OP)}$ with the associated state $\mathcal{S}(\hat{\theta})= ( \hat{m}, \hat{\phi})$, then for $\theta \in L^2(S\times \Omega)$ it holds
    \begin{equation*}
        \int_{S\times \Omega} (\hat{\phi}-\phi_d)\varphi_2 \, dx\,dt+ \delta\int_{S \times \Omega} (\theta- \hat{\theta})\hat{\theta} \,dx\,dt \geq 0,
    \end{equation*}
    where $\mathcal{D} \mathcal{J}(\hat{\theta})( \theta - \hat{\theta})= (\varphi_1, \varphi_2)$ is the unique weak solution to the linearized system \eqref{lin 1}--\eqref{lin 3} for $h:= \theta - \hat{\theta}$.
    \end{theorem}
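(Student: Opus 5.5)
The inequality is a variational inequality, so I will obtain it from the first-order condition for minimizing the reduced functional $\mathcal{J}(\theta)=\mathcal{J}(\mathcal{S}(\theta),\theta)$ over the convex set $\mathcal{U}_{ad}$. The statement should be read for $\theta\in\mathcal{U}_{ad}$, since admissible directions $\theta-\hat\theta$ are what the convexity argument produces. Likewise, the roles should be read as $\mathcal{D}\mathcal{S}(\hat\theta)(\theta-\hat\theta)=(\varphi_1,\varphi_2)$.

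The first step is to record that $\mathcal{J}$ splits into two pieces. Write $\mathcal{J}=\mathcal{J}_1\circ\Pi_2\circ\mathcal{S}+\mathcal{J}_2$, where
\[
\mathcal{J}_1(\phi)=\tfrac12\|\phi-\phi_d\|^2_{L^2(S\times\Omega)},\qquad \mathcal{J}_2(\theta)=\tfrac{\delta}{2}\|\theta\|^2_{L^2(S\times\Omega)},
\]
and $\Pi_2$ is the projection onto the second component. $\mathcal{J}_1$ is a continuous quadratic functional on $L^2(S\times\Omega)$, with derivative $\mathcal{D}\mathcal{J}_1(\phi)k=\int_{S\times\Omega}(\phi-\phi_d)k$. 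The embedding $\mathcal{W}\hookrightarrow L^2(S\times\Omega)$ is continuous and linear, so $\Pi_2$ composed with it is bounded linear. Combining the preceding differentiability theorem for $\mathcal{S}$ on the open set $\mathcal{U}$ (from (A3)) with the chain rule, $\mathcal{J}$ is Fréchet differentiable at $\hat\theta$ with
\[
\mathcal{D}\mathcal{J}(\hat\theta)h=\int_{S\times\Omega}(\hat\phi-\phi_d)\varphi_2^h\,dx\,dt+\delta\int_{S\times\Omega}\hat\theta h\,dx\,dt .
\]
Here $(\varphi_1^h,\varphi_2^h)$ is the solution of \eqref{lin 1}--\eqref{lin 3} for the datum $h$, which Theorem \ref{thm 4} guarantees exists uniquely.

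The second step is the standard convexity argument. Fix $\theta\in\mathcal{U}_{ad}$ and set $\theta_s=\hat\theta+s(\theta-\hat\theta)$ for $s\in(0,1]$. Because $\mathcal{U}_{ad}$ is convex, $\theta_s\in\mathcal{U}_{ad}\subset\mathcal{U}$. Optimality (or local optimality) gives $\mathcal{J}(\theta_s)\ge\mathcal{J}(\hat\theta)$ for small $s$. Dividing by $s$ and letting $s\downarrow0$ yields $\mathcal{D}\mathcal{J}(\hat\theta)(\theta-\hat\theta)\ge0$, since Fréchet differentiability implies Gâteaux differentiability along this segment. Inserting $h=\theta-\hat\theta$ into the formula above gives the claimed inequality. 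The linearized solution for this $h$ is exactly the pair $(\varphi_1,\varphi_2)$ in the statement, identified by uniqueness in Theorem \ref{thm 4}.

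The main obstacle is not this final argument but the justification of the chain rule. It needs the remainder $\mathcal{S}(\hat\theta+h)-\mathcal{S}(\hat\theta)-\mathcal{D}\mathcal{S}(\hat\theta)h$ to be $o(\|h\|)$ in a norm controlling $L^2(S\times\Omega)$, and it needs $h\mapsto\varphi_2^h$ to be bounded linear into $L^2(S\times\Omega)$. Linearity follows because \eqref{lin 1}--\eqref{lin 3} is linear in $(\varphi_1,\varphi_2,h)$ and its solution is unique. Boundedness follows from the stability estimate of Theorem \ref{thm 4}. The $o(\|h\|)$ property is supplied by the preceding theorem, whose estimate on $(z_1,z_2)$ includes the $L^\infty(S;L^2)$ bound. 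I would simply invoke these results, noting only that convergence in $\mathcal{F}$ implies convergence in $L^2(S\times\Omega)$, which is all the tracking term needs.
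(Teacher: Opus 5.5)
Your proposal is correct and follows the paper's route: you apply the chain rule through the Fréchet differentiability of $\mathcal{S}$ to get $\mathcal{D}\mathcal{J}(\hat{\theta})h=\int_{S\times\Omega}(\hat{\phi}-\phi_d)\varphi_2^h\,dx\,dt+\delta\int_{S\times\Omega}\hat{\theta}h\,dx\,dt$, and then use the convexity of $\mathcal{U}_{ad}$ to obtain $\mathcal{D}\mathcal{J}(\hat{\theta})(\theta-\hat{\theta})\geq 0$. Your corrected readings, namely $\theta\in\mathcal{U}_{ad}$ and $\mathcal{D}\mathcal{S}(\hat{\theta})(\theta-\hat{\theta})=(\varphi_1,\varphi_2)$, are exactly what the paper derives in the paragraph just before the theorem.
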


\subsection{Adjoint problem}
We start off by constructing  the following Lagrangian function 
\begin{equation*}
    \mathfrak{J}: \mathcal{W}\times \mathcal{W} \times \mathcal{U}_{ad} \times \mathcal{W}\times \mathcal{W} \to \mathbb{R}
\end{equation*}
such that 
\begin{align*}
    \mathfrak{J}(m, \phi, \theta, \gamma_1, \gamma_2):= \frac{1}{2}\int_{S \times \Omega}|\phi-\phi_d|^2\, dx\,dt+\frac{\delta}{2}\int_{S \times \Omega}|\theta|^2 \, dx\,dt\nonumber\\
    -\int_{S \times \Omega} \Big[\partial_t m - \nabla \cdot \{\nabla m - 2 \beta (\phi-m^2)(\nabla J * m)\}\Big]\gamma_1\,dx dt\nonumber\\
    -\int_{S\times \Omega}\Big[\partial_t \phi - \nabla \cdot \{\nabla \phi - 2 \beta m(1-\phi)(\nabla J * m)\}+ \alpha (1-\phi) + \theta\Big] \gamma_2\,dx\,dt.
\end{align*}
Step 1: We compute the Gâteaux derivative of the Lagrangian with respect to the variable $m$. The Gâteaux derivative of $\mathfrak{J}$ taken at $(\hat{m},\hat{\phi},\hat{\theta})$ in the direction $h_1$ is defined by

\begin{align*}
    &\mathcal{D}_m\mathfrak{J}(\hat{m}, \hat{\phi}, \hat{\theta})h_1= 0\, \, \text{in other words} \,\, \frac{d}{d \lambda} \mathfrak{J}(\hat{m}+ \lambda h_1)\Big|_{\lambda = 0}=0\,\, \text{ for all}\,\, h_1 \,\, \text{with $h_1(0)=0$}.
\end{align*}
Now,
 \begin{align*}
     &\frac{d}{d \lambda} \mathfrak{J}(\hat{m}+ \lambda h_1)\\
 = &- \frac{d}{d \lambda} \int_{S \times \Omega} \Big[ \partial_t(\hat{m}+ \lambda h_1)- \nabla \cdot \big\{\nabla(\hat{m}+ \lambda h_1)- 2 \beta (\phi- (\hat{m}+ \lambda h_1)^2) (\nabla J *(\hat{m}+ \lambda h_1))\}\Big]\gamma_1\, dx\,dt\\
 & \,-\frac{d}{d \lambda} \int_{S \times \Omega} \Big[\partial_t \phi - \nabla \cdot \{ \nabla \phi - 2 \beta(\hat{m}+ \lambda h_1)(1-\phi)(\nabla J * (\hat{m} + \lambda h_1))\}+ \alpha(1-\phi) + \theta \Big]\gamma_2 \,dx\,dt\\
 =& -\int_{S \times \Omega} \Big[ \partial_t h_1- \nabla \cdot \{ \nabla h_1+4\beta (\hat{m}+ \lambda h_1)h_1 (\nabla J * (\hat{m} + \lambda h_1))- 2 \beta (\phi- (\hat{m}+ \lambda h_1)^2) (\nabla J * h_1)\}\Big]\gamma_1 \, dx\, dt \\
 &- \int_{S \times \Omega} \Big[\nabla \cdot \{  2 \beta h_1 (1- \phi)(\nabla J * (\hat{m}+ \lambda h_1))+ 2 \beta (\hat{m} + \lambda h_1)(1-\phi)(\nabla J * h)\} \Big] \gamma_2 \, dx\, dt.
 \end{align*}
Therefore,
\begin{align*}
    &\frac{d}{d \lambda} \mathfrak{J}(\hat{m}+ \lambda h_1)\Big|_{\lambda = 0}\,\, \text{at}\,\, (\hat{m}, \hat{\phi})\\
    =& -\int_{S \times \Omega} \Big[ \partial_t h_1- \nabla \cdot \{ \nabla h_1+ 4\beta \hat{m}h_1 (\nabla J * \hat{m})- 2 \beta (\hat{\phi}- \hat{m}^2) (\nabla J * h_1)\}\Big]\gamma_1 \, dx\, dt \\
 &- \int_{S \times \Omega} \Big[\nabla \cdot \{ 2 \beta h_1 (1- \hat{\phi})(\nabla J * \hat{m})+ 2 \beta \hat{m} (1-\hat{\phi})(\nabla J * h_1)\} \Big] \gamma_2 \, dx\, dt.
\end{align*}

\begin{align*}
    \text{1st term:}&\, \int_{S \times \Omega} \partial_t h_1 \gamma_1\,dxdt = -\int_{S \times \Omega} \partial_t \gamma_1 h_1 \,dxdt+ \int_{\Omega} (h_1 \gamma_1)\Big|_{0}^T\, dx,\\
    \text{2nd term:}&\, \int_{S \times \Omega}  - \Delta h_1 \gamma_1\,dxdt= \int_{S \times \Omega}  - \Delta \gamma_1 h_1\,dxdt- \int_{S \times \partial\Omega} \gamma_1 \nabla h_1 \cdot \Vec{n}\,dsdt+ \int_{S \times \partial\Omega}  h_1\nabla \gamma_1 \cdot \Vec{n}\,dsdt,\\
     \text{3rd term:}&\, -\int_{S \times \Omega}  \nabla \cdot \{4\beta \hat{m}h_1 (\nabla J * \hat{m})\} \gamma_1\,dxdt= \int_{S \times \Omega}  4\beta \hat{m}h_1 (\nabla J * \hat{m})\cdot \nabla \gamma_1\,dxdt\\
     & \hspace{6cm 
     }\underbrace{- \int_{S \times \partial  \Omega}\gamma_1 \nabla \{ 4\beta \hat{m}h_1 (\nabla J * \hat{m})\}\cdot \Vec{n} \,dsdt }_{\substack{=0 \,\,\text{as}\,\, \hat{m} \text{ satisfies periodic boundary conditions}}},\\
     \text{4th term:}&\, \int_{S \times \Omega} \nabla \cdot \{ 2 \beta (\hat{\phi} - \hat{m}^2)( \nabla J * h_1)\} \gamma_1\,dxdt= -\int_{S \times \Omega} 2 \beta (\hat{\phi} - \hat{m}^2)( \nabla J * h_1)\cdot \nabla  \gamma_1\,dxdt\\
     & \hspace{6cm}+ \underbrace{\int_{S \times \partial  \Omega}\gamma_1 \nabla \{2 \beta (\hat{\phi} - \hat{m}^2)( \nabla J * h_1) \} \cdot \Vec{n}\,dsdt}_{\substack{=0 \,\,\text{as}\,\, \hat{m}, \hat{\phi} \text{ satisfy periodic boundary conditions}}}\\
     & \hspace{6cm} 
    =-\int_{S \times \Omega} 2 \beta (\hat{\phi} - \hat{m}^2)( \nabla J * \nabla \gamma_1)h_1\,dxdt,
    \end{align*}
    \begin{align*}
    \text{5th term:}&\, \int_{S \times \Omega} \nabla \cdot \{  2 \beta h_1 (1- \hat{\phi})(\nabla J * \hat{m})\} \gamma_2 \,dxdt= -\int_{S \times \Omega} 2 \beta h_1 (1- \hat{\phi})(\nabla J * \hat{m}) \cdot \nabla \gamma_2 \,dxdt\\
    & \hspace{6cm}+ \underbrace{\int_{S \times \partial \Omega} \gamma_2  \{ 2 \beta h_1 (1- \hat{\phi})(\nabla J * \hat{m}) \} \cdot \Vec{n}\, dsdt }_{\substack{=0 \,\,\text{as}\,\, \hat{m}, \hat{\phi} \text{ satisfy periodic boundary conditions}}}, \\
    \text{6th term:}&\, \int_{S \times \Omega} \nabla\cdot \{2 \beta \hat{m}(1-\hat{\phi})(\nabla J * h_1)\} \gamma_2 \,dxdt= -\int_{S \times \Omega} 2 \beta \hat{m}(1-\hat{\phi})(\nabla J * h_1) \cdot \nabla \gamma_2 \,dxdt\\
    & \hspace{6cm} +\underbrace{\int_{S \times \partial \Omega} \gamma_2\{2 \beta \hat{m}(1-\hat{\phi})(\nabla J * h_1)\}  \cdot \Vec{n}\, dsdt}_{\substack{=0 \,\,\text{as}\,\, \hat{m}, \hat{\phi} \text{ satisfy periodic boundary conditions}}} \\
    & \hspace{6cm}= -\int_{S \times \Omega} 2 \beta \hat{m}(1-\hat{\phi})(\nabla J *  \nabla \gamma_2)  h_1\,dxdt.
\end{align*}
Hence,
\begin{equation*}
    \frac{d}{d \lambda} \mathfrak{J}(\hat{m}+ \lambda h_1)\Big|_{\lambda = 0}\,\, \text{at}\,\, (\hat{m}, \hat{\phi})=0
\end{equation*}
implies
\begin{align}
    \int_{S \times \Omega}\{ \partial_t \gamma_1+\Delta \gamma_1-4\beta \hat{m} (\nabla J * \hat{m})\cdot \nabla \gamma_1+2 \beta (\hat{\phi} - \hat{m}^2)( \nabla J * \nabla \gamma_1)+2 \beta (1- \hat{\phi})(\nabla J * \hat{m}) \cdot \nabla \gamma_2\nonumber\\
    +2 \beta \hat{m}(1-\hat{\phi})(\nabla J *  \nabla \gamma_2)\}h_1\, dxdt+  \int_{\Omega} (h_1 \gamma_1)(T)\, dx+\int_{S \times \partial \Omega} h_1 \nabla \gamma_1 \cdot \Vec{n} \, dsdt =0. \label{adj 1}
\end{align}
Now, for all $h_1 \in C^\infty_0(S \times \Omega)$, \eqref{adj 1} becomes
\begin{align*}
     \int_{S \times \Omega}\{ \partial_t \gamma_1+\Delta \gamma_1-4\beta \hat{m} (\nabla J * \hat{m})\cdot \nabla \gamma_1+2 \beta (\hat{\phi} - \hat{m}^2)( \nabla J * \nabla \gamma_1)+2 \beta (1- \hat{\phi})(\nabla J * \hat{m}) \cdot \nabla \gamma_2\nonumber\\
    +2 \beta \hat{m}(1-\hat{\phi})(\nabla J *  \nabla \gamma_2)\}h_1\, dxdt=0.
\end{align*}
Since, $C^\infty_0(S \times \Omega)$ is dense in $L^2(\Omega)$, then we have 
\begin{align*}
    \partial_t \gamma_1+\Delta \gamma_1- 4\beta \hat{m} (\nabla J * \hat{m})\cdot \nabla \gamma_1+2 \beta (\hat{\phi} - \hat{m}^2)( \nabla J * \nabla \gamma_1)+2 \beta (1- \hat{\phi})(\nabla J * \hat{m}) \cdot \nabla \gamma_2\nonumber\\
    +2 \beta \hat{m}(1-\hat{\phi})(\nabla J *  \nabla \gamma_2)=0 \,\, \text{in}\, \, S \times \Omega.
\end{align*}
Next, choose $y_1 \in C^1( \overline{S \times \Omega})$ such that ${y_1}|_{S \times \partial \Omega}=0$, then 
\begin{align*}
    &\int_{\Omega} (h_1 \gamma_1)(T)\, dx=0 \,\,\text{in}\,\, \Omega,\\
    \Rightarrow &\,\,\,\gamma_1(T) =0 \,\,\text{in}\,\, \Omega.
\end{align*}
Finally, choose $y_1 \in C^1( \overline{S \times \Omega})$ to get that 
\begin{equation*}
    \nabla \gamma_1 \cdot \Vec{n}=0 \,\, \text{on}\,\, S \times \partial \Omega.
\end{equation*}
 Summarizing the above equalities yield
\begin{align*}
    \partial_t \gamma_1+\Delta \gamma_1- 4\beta \hat{m} (\nabla J * \hat{m})\cdot \nabla \gamma_1+2 \beta (\hat{\phi} - \hat{m}^2)( \nabla J * \nabla \gamma_1)+2 \beta (1- \hat{\phi})(\nabla J * \hat{m}) \cdot \nabla \gamma_2\nonumber\\
    +2 \beta \hat{m}(1-\hat{\phi})(\nabla J *  \nabla \gamma_2)=&0 \,\, \text{in}\, \, S \times \Omega,\\
    \nabla \gamma_1 \cdot \Vec{n} =& 0 \, \text{on}\, \, S \times \partial \Omega,\\
    \gamma_1(T,x) =&0 \,\, \text{in}\, \,  \Omega.
\end{align*}

Step 2: Next, we compute the Gâteaux derivative of the Lagrangian with respect to the variable $\phi$. The Gâteaux derivative of $\mathfrak{J}$ at $(\hat{m},\hat{\phi},\hat{\theta})$ in the direction $h_2$ is defined by

\begin{align*}
    &\mathcal{D}_\phi\mathfrak{J}(\hat{m}, \hat{\phi}, \hat{\theta})h_2= 0 \,\,\text{in other words}\,\,  \frac{d}{d \lambda} \mathfrak{J}(\hat{\phi}+ \lambda h_2)\Big|_{\lambda = 0}=0 \,\, \text{ for all}\,\, h_2 \,\, \text{with $h_2(0)=0$}.
\end{align*}
We can now compute the following: 
\begin{align*}
    &\frac{d}{d \lambda} \mathfrak{J}(\hat{\phi}+ \lambda h_2)\\
    =& \frac{d}{d\lambda}\Big[\frac{1}{2}\int_{S \times \Omega}|\hat{\phi} + \lambda h_2-\phi_d|^2\, dx\,dt+\frac{\delta}{2}\int_{S \times \Omega}|\theta|^2 \, dx\,dt \Big]\\
    &-\frac{d}{d \lambda} \int_{S \times \Omega}[\partial_t m- \nabla \cdot \{2 \beta( \hat{\phi}+\lambda h_2-m^2)(\nabla J * m)\}] \gamma_1\, dxdt\\
    &- \frac{d}{d\lambda}\int_{ S\times \Omega} [ \partial_t ( \hat{\phi} + \lambda h_2) - \nabla \cdot \{ \nabla (\hat{\phi}+ \lambda h_2)- 2 \beta m (1- \hat{\phi}- \lambda h_2)(\nabla J * m)\} + \alpha (1- \hat{\phi}-\lambda h_2)+ \theta]\gamma_2 \,dxdt\\
    =& \int_{ S\times \Omega}(\hat{\phi} + \lambda h_2-\phi_d) h_2 \, dx dt+\int_{ S\times \Omega} \nabla \cdot \{2 \beta h_2 (\nabla J * m) \} \gamma_1\,dxdt\\
    &- \int_{S \times \Omega} [\partial_t h_2 - \nabla \cdot \{ \nabla h_2- 2 \beta m (-h_2)( \nabla J * m)\}+ \alpha (-h_2)]\gamma_2\,dxdt 
    \end{align*}
\begin{align}
    &\frac{d}{d \lambda} \mathfrak{J}(\hat{\phi}+ \lambda h_2)\Big|_{\lambda = 0}\,\, \text{ at}\,\, (\hat{m}, \hat{\phi})\, \, =0 \nonumber\\
    \Rightarrow& \int_{ S \times \Omega}(\hat{\phi}- \phi_d)h_2\,dxdt+ \int_{ S\times \Omega} \nabla \cdot \{2 \beta h_2 (\nabla J * \hat{m}) \} \gamma_1\,dxdt\nonumber\\
    &- \int_{S \times \Omega} [\partial_t h_2 - \nabla \cdot \{ \nabla h_2- 2 \beta \hat{m} (-h_2)( \nabla J * \hat{m})\}+ \alpha (-h_2)]\gamma_2\,dxdt =0\nonumber\\
    \Rightarrow & \int_{ S \times \Omega}(\hat{\phi}- \phi_d)h_2\,dxdt+ \int_{ S\times \Omega} \nabla \cdot \{2 \beta h_2 (\nabla J * \hat{m}) \} \gamma_1\,dxdt+ \int_{ S\times \Omega} \partial_t \gamma_2 h_2\,dxdt- \int_{ \Omega}  ( \gamma_2 h_2)\Big|_{0}^T dx\nonumber\\
    &+\int_{ S\times\Omega } \Delta \gamma_2 h_2\,dxdt+\int_{ S\times\Omega } \nabla \cdot \{2 \beta \hat{m}h_2 (\nabla J * \hat{m})\} \gamma_2\,dxdt+\alpha \int_{ S\times \Omega} \gamma_2 h_2 \, dxdt\\
    &- \int_{ S \times \partial \Omega} h_2 \nabla \gamma_2 \cdot \Vec{n}\,dsdt=0\nonumber
    \end{align}
    \begin{align}
    \Rightarrow& \int_{ S \times \Omega}(\hat{\phi}- \phi_d)h_2\,dxdt- \int_{ S\times \Omega} 2 \beta h_2 (\nabla J * \hat{m}) \cdot \nabla \gamma_1 \, dxdt+ \int_{ S\times \partial\Omega}\gamma_1\{2 \beta h_2 (\nabla J * \hat{m}) \}\cdot \Vec{n}dsdt\nonumber\\
    &+  \int_{ S\times \Omega} \partial_t \gamma_2 h_2\,dxdt- \int_{ \Omega}  ( \gamma_2 h_2)\Big|_{0}^T dx+ \int_{ S\times\Omega } \Delta \gamma_2 h_2\,dxdt-\int_{ S\times\Omega }2 \beta \hat{m} h_2 (\nabla J * \hat{m})\cdot \nabla \gamma_2\,dxdt\nonumber\\
    & + \int_{ S \times \partial \Omega} \gamma_2 \{ 2 \beta \hat{m} h_2 (\nabla J * \hat{m}) \} \cdot \Vec{n} +\alpha \int_{ S\times \Omega} \gamma_2 h_2 \, dxdt- \int_{ S \times \partial \Omega} h_2 \nabla \gamma_2 \cdot \Vec{n}\,dsdt=0 \label{adj 2}
\end{align}

Following the same fashion as mentioned in Step 1, \eqref{adj 2} yields

\begin{align*}
    \partial_t \gamma_2 + \Delta \gamma_2 -2 \beta  (\nabla J * \hat{m}) \cdot \nabla \gamma_1 -2 \beta \hat{m} (\nabla J * \hat{m})\cdot \nabla \gamma_2+\alpha \gamma_2 =&-(\hat{\phi}- \phi_d)&& \text{in}\,\, S \times \Omega,\\
    \nabla \gamma_2 \cdot \Vec{n} =& 0 && \text{on}\, \, S \times \partial \Omega,\\
    \gamma_2(T,x) =&0 && \text{in}\, \,  \Omega.
\end{align*}

\begin{theorem}
    Let the assumptions (A1)--(A4) are satisfied and assume that $\hat{\theta}$ be an local optimal control of the optimal control problem $\mathcal{(OP)}$ with the associated state $(\hat{m}, \hat{\phi})= \mathcal{S}(\hat{\theta})$, then the following adjoint problem
    \begin{align}
\partial_t \gamma_1 + \Delta \gamma_1- 4\beta \hat{m} (\nabla J * \hat{m})\cdot \nabla \gamma_1+ 2 \beta (\hat{\phi}- \hat{m}^2)(\nabla J * \nabla \gamma_1)\,\,\,&\nonumber\\
+ 2 \beta  (1-\hat{\phi})(\nabla J * \hat{m})\cdot \nabla \gamma_2+ 2 \beta \hat{m}(1-\hat{\phi})(\nabla J * \nabla \gamma_2)=&0 && \text{in}\,\,S \times \Omega, \label{1}\\
    \partial_t \gamma_2 + \Delta \gamma_2 -2 \beta  (\nabla J * \hat{m}) \cdot \nabla \gamma_1 -2 \beta \hat{m} (\nabla J * \hat{m})\cdot \nabla \gamma_2+\alpha \gamma_2=&  \phi_d-\hat{\phi} && \text{in}\,\,S \times \Omega,\label{2}\\
    \nabla \gamma_1 \cdot \Vec{n}=0, \,\nabla \gamma_2 \cdot \Vec{n}=&0 &&\text{on} \,\, S \times \partial \Omega,\\
     \gamma_1 (T,x) = 0, \,& \gamma_1 (T,x)=0  &&\text{in}\,\, \Omega,\label{3}
\end{align}
has a unique solution $(\gamma_1, \, \gamma_2) \in  \mathcal{W}\times \mathcal{W}$. 

\end{theorem}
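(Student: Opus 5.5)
The adjoint system \eqref{1}--\eqref{3} is a \emph{linear}, backward-in-time parabolic system with nonlocal first-order coupling. The natural plan is to reverse time, reduce it to a forward problem of the same type as the linearized system of Theorem \ref{thm 4}, and repeat that argument. Set $\tau := T - t$ and $\tilde\gamma_i(\tau,x) := \gamma_i(T-\tau,x)$. Then $\partial_\tau \tilde\gamma_i = -\partial_t\gamma_i$, so \eqref{1}--\eqref{2} become forward equations
\begin{equation*}
\partial_\tau \tilde\gamma_1 - \Delta\tilde\gamma_1 = \mathcal{B}_1(\tilde\gamma_1,\tilde\gamma_2), \qquad \partial_\tau \tilde\gamma_2 - \Delta\tilde\gamma_2 = \mathcal{B}_2(\tilde\gamma_1,\tilde\gamma_2) + \alpha\tilde\gamma_2 + (\hat\phi-\phi_d)(T-\tau),
\end{equation*}
with zero initial data. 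Here $\mathcal{B}_1,\mathcal{B}_2$ collect the drift terms, each of the form $a(x,t)\cdot\nabla\tilde\gamma_j$ or $b(x,t)\,(\nabla J * \nabla\tilde\gamma_j)$. The coefficients are built from $\hat m$, $\hat\phi$ and $\nabla J*\hat m$, which are bounded because $|\hat m|\le|\hat\phi|\le 1$ (Theorem 1.1 of \cite{lyons2024phase}) and $\|\nabla J*\hat m\|_{L^\infty}\le\|\nabla J\|_{L^2}\|\hat m\|_{L^2}$. Under periodic boundary conditions the Neumann conditions are not needed; I would work in $H^1_\#(\Omega)$ as before.

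Existence will follow from a Faedo--Galerkin scheme in the periodic eigenbasis of $-\Delta$, as in the proof of Theorem \ref{thm 4}. The core is an a priori energy estimate. Test the first equation with $\tilde\gamma_1$ and the second with $\tilde\gamma_2$, and add. Each drift term is of the form
\begin{equation*}
\int_\Omega a\cdot\nabla\tilde\gamma_j\,\tilde\gamma_i\,dx \le \|a\|_{L^\infty}\|\nabla\tilde\gamma_j\|_{L^2}\|\tilde\gamma_i\|_{L^2}.
\end{equation*}
The nonlocal terms need separate handling. Since $J$ is symmetric, $\nabla J*\nabla\gamma = \nabla(\nabla J)*\gamma$ after one integration by parts, and this is legitimate because $J\in C^2$ has compact support. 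This gives the bound $\|\nabla J*\nabla\tilde\gamma_j\|_{L^\infty}\le \|D^2J\|_{L^2}\|\tilde\gamma_j\|_{L^2}$, so these terms only cost an $L^2$ norm, not a gradient. By Young's inequality each term is at most $\tfrac14(\|\nabla\tilde\gamma_1\|^2+\|\nabla\tilde\gamma_2\|^2)+C(t)(\|\tilde\gamma_1\|^2+\|\tilde\gamma_2\|^2)$, and the source contributes at most $\|\hat\phi-\phi_d\|_{L^2}\|\tilde\gamma_2\|_{L^2}$. Grönwall's inequality then gives
\begin{equation*}
\sup_\tau\|\tilde\gamma\|_{L^2}^2+\int_S\|\nabla\tilde\gamma\|_{L^2}^2 \le C\|\hat\phi-\phi_d\|^2_{L^2(S\times\Omega)},
\end{equation*}
and the right-hand side is finite by (A4) and Theorem \ref{thm 1}. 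The $L^2(S;H^{-1})$ bound on $\partial_\tau\tilde\gamma_i$ follows by duality, exactly as in Theorems \ref{thm 1} and \ref{thm 4}. Passing to the limit in the Galerkin scheme is straightforward because the problem is linear: weak convergence is enough, and the coefficients are fixed and lie in $L^\infty$ or $L^2(S;L^\infty)$.

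Uniqueness follows from linearity. The difference of two solutions solves the same system with zero source and zero terminal data, so the energy estimate forces it to vanish.

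The main obstacle is the integrability of the coefficients. The factor $\hat m(\nabla J*\hat m)$ is bounded, but $C(t)$ contains terms such as $\|\hat\phi-\hat m^2\|_{L^\infty}$, and these need the $L^\infty$ bounds of \cite{lyons2024phase} rather than the mere $L^2$ energy bounds used earlier in the paper. I would invoke those bounds explicitly to ensure $C\in L^1(S)$, which is what Grönwall's inequality requires.
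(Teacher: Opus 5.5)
Your proposal is correct and follows the paper's proof essentially step for step. The shared steps are: Galerkin for existence; testing the adjoint equations with $\gamma_1,\gamma_2$ and closing with Young and Gr\"onwall backward in time (your substitution $\tau=T-t$ is the paper's integration over $(t,T)$); a duality bound for $\partial_t\gamma_i$ in $L^2(S;H^{-1}(\Omega))$; uniqueness by linearity; and, exactly as in the paper, the bounds $|\hat m|\le|\hat\phi|\le 1$ to control the coefficients.

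Your two deviations are harmless refinements. Writing $\nabla J*\nabla\gamma_j=\Delta J*\gamma_j$ makes the nonlocal terms cost only $\|\gamma_j\|_{L^2(\Omega)}$. Working explicitly in $H^1_\#(\Omega)$ matches the paper's convention that $\mathcal{W}$ consists of periodic functions; note that the listed Neumann condition is not what that weak formulation encodes.
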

\begin{proof}
The existence of a solution to the adjoint problem \eqref{1}–\eqref{3} follows in a straightforward way, for instance by applying suitably the Galerkin method. In what follows next, we present a proof for the {\em a priori} estimate.
   \par Multiplying the equation \eqref{1} by $\gamma_1$ and integrating the resulting equalities over $ \Omega$, we obtain
\begin{align}
    &\frac{1}{2} \frac{d}{dt} ||\gamma_1||^2_{L^2(\Omega)}- ||\nabla \gamma_1||^2_{L^2(\Omega)}\nonumber\\
    =& \int_{\Omega} \Big\{4\beta \hat{m} (\nabla J * \hat{m})\cdot \nabla \gamma_1- 2 \beta (\hat{\phi}- \hat{m}^2)(\nabla J * \nabla \gamma_1)\nonumber\\
&- 2 \beta  (1-\hat{\phi})(\nabla J * \hat{m})\cdot \nabla \gamma_2- 2 \beta \hat{m}(1-\hat{\phi})(\nabla J * \nabla \gamma_2)\Big\}\gamma_1\,dx\nonumber\\
\leq & \Big\{4 \beta || \hat{m} (\nabla J * \hat{m})||_{L^\infty(\Omega)} || \nabla \gamma_1||_{L^2(\Omega)}  + 2 \beta ||(\hat{\phi}- \hat{m}^2)(\nabla J * \nabla \gamma_1)||_{L^\infty(\Omega)}\nonumber\\
& + 2 \beta ||(1-\hat{\phi})(\nabla J * \hat{m})||_{L^\infty(\Omega)} || \nabla \gamma_2||_{L^2(\Omega)} + 2 \beta  || \hat{m}(1-\hat{\phi})||_{L^2(\Omega)} ||\nabla J * \nabla \gamma_2||_{L^\infty(\Omega)} \Big\} ||\gamma_1||_{L^2(\Omega)}. \label{foc 4}
\end{align}

Again, multiplying \eqref{2} by $\gamma_2$ and integrating the resulting equalities over $ \Omega$, we obtain
\begin{align}
     \frac{1}{2} \frac{d}{dt} ||\gamma_2||^2_{L^2(\Omega)}-||\nabla \gamma_2||^2_{L^2(\Omega)}=& \int_{\Omega} \Big\{2 \beta \hat{m}(\nabla J * \hat{m}) \cdot \nabla \gamma_2+2 \beta  (\nabla J * \hat{m}) \cdot \nabla \gamma_1- \alpha \gamma_2\nonumber\\
     & -( \hat{\phi}- \phi_d)\Big\} \, \gamma_2 \,dx\nonumber\\
     \leq & \Big\{2 \beta || \hat{m}(\nabla J * \hat{m})||_{L^\infty(\Omega)} || \nabla \gamma_2||_{L^2(\Omega)} + \alpha ||\gamma_2||_{L^2(\Omega)}\nonumber\\
     & +2 \beta || \nabla J * \hat{m}||_{L^\infty(\Omega)} || \nabla \gamma_1||_{L^2(\Omega)}+ || \hat{\phi}- \phi_d||_{L^2(\Omega)}\Big\}||\gamma_2||_{L^2(\Omega)}. \label{foc 5}
\end{align}
Now, adding \eqref{foc 4} and \eqref{foc 5} and applying the Young's inequality, we obtain
\begin{align}
   & - \frac{d}{dt} (||\gamma_1||^2_{L^2(\Omega)}+ ||\gamma_2||^2_{L^2(\Omega)})+ (||\nabla \gamma_1||^2_{L^2(\Omega)}+ ||\nabla \gamma_2||^2_{L^2(\Omega)})\nonumber\\
    &\leq C_1 (||\gamma_1||^2_{L^2(\Omega)}+ ||\gamma_2||^2_{L^2(\Omega)}) + C_2 || \hat{\phi}- \phi_d||^2_{L^2(\Omega)}\nonumber,
\end{align}
where 
\begin{align*}
     C_1 = &C(1+ || \hat{m} (\nabla J * \hat{m})||^2_{L^\infty(\Omega)}+ ||(\hat{\phi}- \hat{m}^2)||^2_{L^\infty(\Omega)} || \nabla J ||^2_{L^2(\Omega)} + ||(1-\hat{\phi})(\nabla J * \hat{m})||^2_{L^\infty(\Omega)}\nonumber\\
     &+  || \hat{m}(1-\hat{\phi})||^2_{L^2(\Omega)} || \nabla J ||^2_{L^2(\Omega)} + || (\nabla J * \hat{m})||^2_{L^\infty(\Omega)}).
\end{align*}
Integrating the above inequality over $(t, T)$ and applying Grönwall's inequality, we obtain

\begin{align*}
    ||\gamma_1(t)||^2_{L^2(\Omega)}+ ||\gamma_2(t)||^2_{L^2(\Omega)}+ \int_{0}^{T}(||\nabla \gamma_1||^2_{L^2(\Omega)}+ ||\nabla \gamma_2||^2_{L^2(\Omega)})\,dt\\
    \leq C( || m_0||_{L^2(\Omega)} , || \phi_0||_{L^2(\Omega)}, || \theta||_{L^2(\Omega)}, T).
\end{align*}
Furthermore, we now see that
\begin{align*}
   & \int_{0}^{T} || \partial_t \gamma_1||_{H^{-1}(\Omega)} \,dt \\
   \lesssim&  \int_{0}^{T}  \Big\{ || \nabla \gamma_1 ||_{L^2(\Omega)} +  4 \beta || \hat{m} (\nabla J * \hat{m})||_{L^\infty(\Omega)} || \nabla \gamma_1||_{L^2(\Omega)}  + 2 \beta ||(\hat{\phi}- \hat{m}^2)(\nabla J * \nabla \gamma_1)||_{L^\infty(\Omega)}\nonumber\\
& + 2 \beta ||(1-\hat{\phi})(\nabla J * \hat{m})||_{L^\infty(\Omega)} || \nabla \gamma_2||_{L^2(\Omega)} + 2 \beta  || \hat{m}(1-\hat{\phi})||_{L^2(\Omega)} ||\nabla J * \nabla \gamma_2||_{L^\infty(\Omega)}\Big\} \, dt
\end{align*}
and 
\begin{align*}
    & \int_{0}^{T} || \partial_t \gamma_2||_{H^{-1}(\Omega)} \,dt \\
    \lesssim&  \int_{0}^{T}  \Big\{ || \nabla \gamma_2 ||_{L^2(\Omega)}+ 2 \beta || \hat{m}(\nabla J * \hat{m})||_{L^\infty(\Omega)} || \nabla \gamma_2||_{L^2(\Omega)} + \alpha ||\gamma_2||_{L^2(\Omega)}+2 \beta || \nabla J * \hat{m}||_{L^\infty(\Omega)} || \nabla \gamma_1||_{L^2(\Omega)}\\
    &+ || \hat{\phi}- \phi_d||_{L^2(\Omega)}\Big\}dt.
\end{align*}
The uniqueness of weak solutions is guaranteed by the linearity of the adjoint system.

\end{proof}


\begin{lemma}
Assume that conditions (A1)–(A4) are satisfied and let $\hat{\theta} \in \mathcal{U}_{ad} $  be an optimal control for the problem $( \mathcal{OP} ),$ with associated state $(\hat{m}, \hat{\phi}) = \mathcal{S}(\hat{\theta})$. Let $(\gamma_1, \gamma_2)$ denote the solution of the corresponding adjoint problem. Then the optimal control $\hat{\theta} $ satisfies the following variational inequality:
\begin{equation*}
    \int_{S \times \Omega} \big( \gamma_2 + \delta \hat{\theta} \big)(\theta - \hat{\theta}) \, dxdt \geq 0,
\end{equation*}
holding true all admissible controls $\theta \in \mathcal{U}_{ad} $.
\end{lemma}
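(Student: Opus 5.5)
The plan is to start from the first-order condition of the preceding theorem and use the adjoint system \eqref{1}--\eqref{3} to rewrite the tracking term. Fix $\theta\in\mathcal{U}_{ad}$ and put $h:=\theta-\hat{\theta}\in L^2(S\times\Omega)$. Let $(\varphi_1,\varphi_2)\in\mathcal{W}\times\mathcal{W}$ be the solution of the linearized system \eqref{lin 1}--\eqref{lin 3} given by Theorem \ref{thm 4}. The preceding theorem then gives
\begin{equation*}
\int_{S\times\Omega}(\hat{\phi}-\phi_d)\varphi_2\,dx\,dt+\delta\int_{S\times\Omega}\hat{\theta}(\theta-\hat{\theta})\,dx\,dt\geq 0 .
\end{equation*}
It therefore suffices to prove the duality identity
\begin{equation*}
\int_{S\times\Omega}(\hat{\phi}-\phi_d)\varphi_2\,dx\,dt=\int_{S\times\Omega}\gamma_2\,h\,dx\,dt ,
\end{equation*}
and substituting it into the inequality above yields the lemma.

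To prove this identity I test the weak forms. In the weak form of \eqref{lin 1}, tested with $\gamma_1$, and of \eqref{lin 2}, tested with $\gamma_2$, I integrate over $S$ and add. In the weak form of \eqref{1}, tested with $\varphi_1$, and of \eqref{2}, tested with $\varphi_2$, I do the same. The time derivatives are handled with the integration-by-parts formula in $\mathcal{W}$:
\begin{equation*}
\int_S\langle\partial_t\varphi_i,\gamma_i\rangle\,dt=-\int_S\langle\partial_t\gamma_i,\varphi_i\rangle\,dt ,
\end{equation*}
and the boundary terms vanish because $\varphi_i(0)=0$ and $\gamma_i(T)=0$. The diffusion terms $\int\nabla\varphi_i\cdot\nabla\gamma_i$ appear identically in both sums. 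Periodicity of all functions removes every spatial boundary term, so the Neumann conditions listed for $(\gamma_1,\gamma_2)$ are not needed. The local drift terms are matched term by term. For example, $4\beta\hat m\varphi_1(\nabla J*\hat m)\cdot\nabla\gamma_1$ in the linearized $m$-equation pairs with the term $-4\beta\hat m(\nabla J*\hat m)\cdot\nabla\gamma_1$ of \eqref{1} tested against $\varphi_1$. Likewise $-2\beta\varphi_2(\nabla J*\hat m)\cdot\nabla\gamma_1$ pairs with the $\nabla\gamma_1$-coupling in \eqref{2}, and the other terms pair in the same way. After all cancellations, the linearized side leaves only $\int\gamma_2 h$ and the adjoint side leaves only $\int(\phi_d-\hat\phi)\varphi_2$, up to an overall sign.

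The main obstacle is the nonlocal terms. Each has the form $\int a\,(\nabla J*\varphi_1)\cdot\nabla\gamma$ with $a=\hat\phi-\hat m^2$ or $a=\hat m(1-\hat\phi)$, and must be moved onto $\varphi_1$ by Fubini:
\begin{equation*}
\int_\Omega a\,(\nabla J*\varphi_1)\cdot\nabla\gamma\,dx=\int_\Omega\varphi_1\,\big(\check{\nabla J}*(a\nabla\gamma)\big)\,dx,\qquad \check{\nabla J}(x)=\nabla J(-x)=-\nabla J(x),
\end{equation*}
where the last equality uses that $J$ is symmetric. The weight $a$ therefore ends up inside the convolution, with a minus sign. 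This differs from the form $a(\nabla J*\nabla\gamma)$ written in \eqref{1}. My first step is to check this carefully. If needed, I would read \eqref{1} in the corrected form $-2\beta\nabla J*\big((\hat\phi-\hat m^2)\nabla\gamma_1+\hat m(1-\hat\phi)\nabla\gamma_2\big)$, which is what the Lagrangian computation actually produces. These terms are well defined in $L^2$ because $\nabla J\in L^1$ and $a\in L^\infty$ by the bounds $0\le|m|\le|\phi|\le1$ of \cite{lyons2024phase}.

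Finally I must fix the sign convention so that the result is $+\gamma_2$ and not $-\gamma_2$. With the Lagrangian written with a minus in front of the state equations and with right-hand side $\phi_d-\hat\phi$ in \eqref{2}, the duality computation gives $\int(\hat\phi-\phi_d)\varphi_2=\pm\int\gamma_2 h$. I would trace this sign explicitly and, if necessary, replace $\gamma_2$ by $-\gamma_2$, that is, flip the sign of the adjoint source term. Then I insert the identity into the first-order condition. All pairings are justified by the regularity $\varphi_i,\gamma_i\in\mathcal{W}$ from Theorem \ref{thm 4} and the adjoint well-posedness theorem.
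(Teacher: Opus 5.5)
Your route is the paper's own: test \eqref{lin 1}--\eqref{lin 2} with $(\gamma_1,\gamma_2)$, test the adjoint with $(\varphi_1,\varphi_2)$, integrate by parts in time, and insert the result into the first-order condition. Your Fubini observation is correct. Because $\nabla J$ is odd, the nonlocal terms of \eqref{1} as printed do not cancel. The paper's proof passes over this when it says that \eqref{4}--\eqref{7} ``yield'' the inequality.

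The gap is your next claim, that all local and zeroth-order terms pair off term by term. That holds for \eqref{1}, but not for \eqref{2} as stated. There, the coupling term $-2\beta(\nabla J*\hat m)\cdot\nabla\gamma_1$ and the reaction term $+\alpha\gamma_2$ both have the wrong sign. Consider the identity $\sum_i\int_S\big(\langle\partial_t\varphi_i,\gamma_i\rangle+\langle\partial_t\gamma_i,\varphi_i\rangle\big)\,dt=0$. The term $2\beta\int\varphi_2(\nabla J*\hat m)\cdot\nabla\gamma_1$ is contributed with the same sign by \eqref{lin 1} and by \eqref{2}, and $\alpha\int\varphi_2\gamma_2$ likewise by \eqref{lin 2} and \eqref{2}. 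So these terms add instead of cancelling. Concretely, even with your corrected nonlocal terms in \eqref{1}, the four tested identities combined with \eqref{2} as printed give
\[
\int_{S\times\Omega}(\hat\phi-\phi_d)\varphi_2\,dx\,dt=\int_{S\times\Omega}h\,\gamma_2\,dx\,dt+4\beta\int_{S\times\Omega}\varphi_2(\nabla J*\hat m)\cdot\nabla\gamma_1\,dx\,dt-2\alpha\int_{S\times\Omega}\varphi_2\gamma_2\,dx\,dt .
\]
The last two integrals do not vanish in general.

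This is a relative sign error between individual terms, not an overall sign. Your fallback of replacing $\gamma_2$ by $-\gamma_2$, or flipping the adjoint source, cannot remove it. Note also that these two operations are not the same in a coupled linear system: flipping the source flips both $\gamma_1$ and $\gamma_2$.

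The missing step is to derive \eqref{2} as well, not just the nonlocal part of \eqref{1}, as the formal adjoint of the weak form of \eqref{lin 1}--\eqref{lin 2}. This gives your corrected \eqref{1} together with
\[
\partial_t\gamma_2+\Delta\gamma_2+2\beta(\nabla J*\hat m)\cdot\nabla\gamma_1-2\beta\hat m(\nabla J*\hat m)\cdot\nabla\gamma_2-\alpha\gamma_2=\phi_d-\hat\phi ,
\]
with periodic conditions and $\gamma_1(T)=\gamma_2(T)=0$. The printed signs trace back to the paper's Lagrangian. There the $\phi$-constraint carries $+\alpha(1-\phi)+\theta$, and Step 2 drops the sign of the drift in the $m$-constraint. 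So appealing to ``what the Lagrangian computation produces'' does not settle the signs unless you redo that computation.

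For the corrected system every term cancels, and you get exactly $\int(\hat\phi-\phi_d)\varphi_2=\int h\gamma_2$, with the plus sign and no flip needed. You must also rerun the Galerkin/energy estimate of the adjoint theorem for the corrected system. This goes through: given the $L^\infty$ bounds on $\hat m,\hat\phi$ used throughout, $\|\nabla J*(a\nabla\gamma)\|_{L^2(\Omega)}\le\|\nabla J\|_{L^1(\Omega)}\|a\|_{L^\infty(\Omega)}\|\nabla\gamma\|_{L^2(\Omega)}$, which Young's inequality absorbs. With these corrections, the rest of your argument proves the lemma.
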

\begin{proof}

Taking test functions $\gamma_1$ and $\gamma_2$ in the weak formulations of \eqref{lin 1} and \eqref{lin 2}, respectively, we obtain
\begin{align}
    \int_{S}\langle \partial_t \varphi_1, \gamma_1\rangle\,dt=&\int_{S \times \Omega}\nabla \cdot [ \nabla \varphi_1- 2 \beta (\varphi_2 - 2 \hat{m}\varphi_1)(\nabla J * \hat{m})-2 \beta (\hat{\phi}-\hat{m}^2)(\nabla J * \varphi_1)]\gamma_1 \, dxdt\nonumber\\ 
    =&-\int_{S \times \Omega} \nabla \varphi_1 \cdot \nabla \gamma_1 \, dxdt+ \int_{S \times \Omega}2 \beta (\varphi_2 - 2 \hat{m}\varphi_1)(\nabla J * \hat{m}) \cdot \nabla \gamma_1 \, dxdt\nonumber\\
    &+ \int_{S \times \Omega}2 \beta (\hat{\phi}-\hat{m}^2)(\nabla J * \varphi_1) \cdot \nabla \gamma_1 \, dxdt\label{4}
\end{align}
and 
\begin{align}
     \int_{S}\langle \partial_t \varphi_2, \gamma_2\rangle\,dt=&\int_{S \times \Omega}\Big\{\nabla \cdot[ \nabla \varphi_2- 2\beta \hat{m}(1- \hat{\phi})(\nabla J * \varphi_1)-2 \beta \varphi_1 (1- \hat{\phi})(\nabla J * \hat{m})\nonumber\\
    & \,\,+ 2 \beta \varphi_2 \hat{m}(\nabla J * \hat{m})]- \alpha \varphi_2 +h\Big\}\gamma_2\,dxdt\nonumber \\
    =& -\int_{ S\times \Omega}\nabla \varphi_2 \cdot  \nabla \gamma_2\, dxdt+\int_{ S\times \Omega}  2\beta \hat{m}(1- \hat{\phi})(\nabla J * \varphi_1) \cdot \nabla \gamma_2\,dxdt\nonumber\\
    &+ \int_{ S\times \Omega}2 \beta \varphi_1 (1- \hat{\phi})(\nabla J * \hat{m}) \cdot \nabla \gamma_2\,dxdt-\int_{ S\times \Omega}2 \beta \varphi_2 \hat{m}(\nabla J * \hat{m})\cdot\nabla \gamma_2\, dxdt \nonumber\\
    &-\alpha \int_{ S\times \Omega} \varphi_2 \gamma_2 \, dxdt+\int_{ S\times \Omega}  h \gamma_2 \, dxdt.\label{5}
\end{align}
Again, taking test functions $ \varphi_1$ and $\varphi_2$ for the weak formulations of \eqref{1} and \eqref{2}, respectively, we obtain

\begin{align}
    &\int_{S}\langle \partial_t\gamma_1, \varphi_1\rangle\,dt+ \int_{S \times \Omega}\Big\{\Delta \gamma_1- 4\beta \hat{m} (\nabla J * \hat{m})\cdot \nabla \gamma_1+ 2 \beta (\hat{\phi}- \hat{m}^2)(\nabla J * \nabla \gamma_1)\nonumber\\
&+ 2 \beta  (1-\hat{\phi})(\nabla J * \hat{m})\cdot \nabla \gamma_2+ 2 \beta \hat{m}(1-\hat{\phi})(\nabla J * \nabla \gamma_2)\Big\}\varphi_1\,dxdt=0\nonumber\\
\Rightarrow&-\int_{S}\langle \partial_t \varphi_1, \gamma_1,\rangle\,dt - \int_{ S\times \Omega} \nabla \gamma_1 \cdot \nabla \varphi_1\,dxdt- \int_{ S\times \Omega} 4\beta \hat{m} (\nabla J * \hat{m})\cdot \nabla \gamma_1 \varphi_1\,dxdt\nonumber\\
&+\int_{ S\times \Omega}2 \beta (\hat{\phi}- \hat{m}^2)(\nabla J * \nabla \gamma_1)\varphi_1\,dxdt+ \int_{ S\times 
\Omega}2 \beta  (1-\hat{\phi})(\nabla J * \hat{m})\cdot \nabla \gamma_2 \varphi_1\,dxdt\nonumber\\
&+\int_{ S\times \Omega}2 \beta \hat{m}(1-\hat{\phi})(\nabla J * \nabla \gamma_2)\varphi_1 \,dxdt=0\label{6}
\end{align}
and 
\begin{align}
    &\int_{S}\langle \partial_t\gamma_2, \varphi_2\rangle\,dt+ \int_{ S\times \Omega}\Big\{\Delta \gamma_2-2 \beta  (\nabla J * \hat{m}) \cdot \nabla \gamma_1-2 \beta \hat{m}(\nabla J * \hat{m}) \cdot \nabla \gamma_2+ \alpha \gamma_2\nonumber\\
     &\hspace{3cm}- (\hat{\phi}- \phi_d)\Big\}\varphi_2 \,dxdt=0\nonumber\\
    \Rightarrow&  -\int_{S } \langle \partial_t \varphi_2, \gamma_2\rangle \,dt- \int_{ S\times \Omega} \nabla\gamma_2 \cdot \nabla \varphi_2\,dxdt- \int_{ S\times \Omega} 2 \beta \hat{m}(\nabla J * \hat{m}) \cdot \nabla \gamma_2 \varphi_2 \,dxdt\nonumber\\
    &- \int_{ S\times \Omega} 2 \beta (\nabla J * \hat{m}) \cdot \nabla \gamma_1\,dxdt+ \alpha\int_{ S\times \Omega}\gamma_2 \varphi_2 \,dxdt- \int_{ S\times \Omega} (\hat{\phi}- \phi_d) \varphi_2\,dxdt=0.\label{7}
\end{align}
Now, for $h = \theta- \hat{\theta}$, equations \eqref{4}, \eqref{5}, \eqref{6} and \eqref{7} yield
\begin{align*}
    & \int_{S\times \Omega} (\hat{\phi}-\phi_d)\varphi_2 \, dx\,dt+ \delta\int_{S \times \Omega} (\theta- \hat{\theta})\hat{\theta} \,dx\,dt \geq 0\\
    \Rightarrow&\int_{S\times \Omega} (\gamma_2 + \delta\hat{\theta} )(\theta-\hat{\theta}) \,dxdt \geq 0.
\end{align*}
\end{proof}
Looking at the closing of this proof, it is worth noting that, due to the box constraints defining the admissible set $\mathcal{U}_{ad}$, the indicated variational inequality implies that the optimal control can be characterized pointwise as the $L^2(S \times \Omega)$ orthogonal projection of $-\frac{1}{\delta} \gamma_2$ on to $\mathcal{U}_{ad}$. This provides an explicit representation of the optimal control in terms of the adjoint variable.
\section{Outlook on possible extensions}
As anticipated already, our main interest lies {\em de facto} in formulating a boundary optimal control formulation in 3D as this fits best to the physical setting we have in mind. This work is just taking care of the theoretical foundation of a somewhat simpler problem.  From the mathematical analysis perspective, we expect to be able to draw some inspiration in this direction from the work \cite{fukao2017boundary}, among  other sources. Our long-term intention is to link the mathematical analysis of the problem that relies on PDE-based estimates with bounds on finite-volume schemes so that, at the end, we provide a computational efficient (convergent) framework to approximate numerically in 3D the solution to the target optimal control formulation. In other words, we wish to extend our work \cite{Nicklas_3D} to include realistic, ``controlled", evaporation effects.

Inspired by the RIMS talk of Prof. Shuji Yoshikawa (Hiroshima) (related to the works \cite{Yoshikawa0,Yoshikawa1}) made us realize that is in fact possible to employ for our setting the microforce balance concept by M. Gurtin \cite{Gurtin_microforces} (and collaborators like E. Fried cf. e.g. \cite{Fried_microforces}) together with the celebrated Coleman-Cole procedure to enrich our model with viscoelasticity properties for the mixture components, which  for organic solar cells morphologies are a mix of two different polymers immersed within a solvent. Consequently, such coupling in 3D between our current mass-balance model with viscoelasticity-like equations  is likely to lead to more realistic [and mathematically challenging] control problems.  Intuitively, we expect that the mechanical properties of the mixture components hinder the realization of optimal control plans that are purely based on evaporation model parameters;  this is yet to be investigated from both analysis and computational viewpoints. 

We are mentioning in passing that further optimal control ideas could makes sense to explore (at least mathematically): a regional control could focus on the local solvent behavior within a specific region of interest like in \cite{Anita}, or perhaps a sliding  mode control (in the spirit of \cite{Gabriela}) can be in principle posed to control morphology formation within a minimum time (which can perhaps be re-written in term of a minimum solvent boundary concentration for a (boundary) control setting with surface evaporation in 3D). All these are possible investigations paths that remain to be explored.

\section*{Acknowledgments}
Wenner Gren Foundation is acknowledged for their kind support via the project UPD2025-0126, which finances the activity of AK. AM thanks the Swedish Research Council (project nr. 2024-05606)  for a partial support. Additionally, AM is grateful for RIMS's hospitality during the open  RIMS Symposium {\em 
Evolution Equations and Related Topics--Abstract Structures and Versatilities} that took place in Kyoto in October 2025, under the organization of Profs.  T. Fukao (Ryukoku University) and T. Yokota (Tokyo University of Sciences).

\bibliographystyle{plain}
\bibliography{references}

\end{document}